\definecolor{blue}{rgb}{0,0,1}
\definecolor{red}{rgb}{1,0,0}
\numberwithin{equation}{section}
\theoremstyle{plain}
\newtheorem{theorem}{Theorem}
\newtheorem{result}{Result}
\newtheorem{lemma}{Lemma}
\begin{document}

\begin{frontmatter}
\title{Asymptotic properties of Principal Component Analysis and shrinkage-bias adjustment under the Generalized Spiked Population model}
\runtitle{PCA in high-dimensional data}

\begin{aug}
\thankstext{t1}{Supported by NIH Grants R00HL113164 and R01HG008773}
\author{\fnms{Rounak} \snm{Dey}\ead[label=e1]{deyrnk@umich.edu}\thanksref{t1}} \and
\author{\fnms{Seunggeun} \snm{Lee}\ead[label=e2]{leeshawn@umich.edu}\thanksref{t1}}

\runauthor{R. Dey \and S. Lee }

\affiliation{Department of Biostatistics, University of Michigan}

\address{Department of Biostatistics\\
	University of Michigan School of Public Health\\
1415 Washington Heights\\
Ann Arbor, MI 48109-2029, USA\\
\printead{e1}\\
\phantom{E-mail:\ }\printead*{e2}}

\end{aug}

\begin{abstract}
With the development of high-throughput technologies, principal component analysis (PCA) in the high-dimensional regime is of great interest. Most of the existing theoretical and methodological results for high-dimensional PCA are based on the spiked population model in which all the population eigenvalues are equal except for a few large ones. Due to the presence of local correlation among features, however, this assumption may not be satisfied in many real-world datasets. To address this issue, we investigated the asymptotic behaviors of PCA under the generalized spiked population model. Based on the theoretical results, we proposed a series of methods for the consistent estimation of population eigenvalues, angles between the sample and population eigenvectors, correlation coefficients between the sample and population principal component (PC) scores, and the shrinkage bias adjustment for the predicted PC scores. Using numerical experiments and real data examples from the genetics literature, we showed that our methods can greatly reduce bias and improve prediction accuracy.
\end{abstract}

\begin{keyword}[class=MSC]
	\kwd[Primary ]{62H25}
	\kwd[; secondary ]{15A18}
\end{keyword}

\begin{keyword}
\kwd{PCA}
\kwd{PC Scores}
\kwd{Random Matrix}
\kwd{High-dimensional data}
\end{keyword}

\end{frontmatter}

\section{Introduction}\label{sec:intro}

Principal component analysis (PCA) is a very popular tool for analyzing high-dimensional biomedical data, where the number of features (p) is often substantially larger than the number of observations (n). PCA is widely used to adjust for population stratification in genome-wide association studies \citep{price} and to identify overall expression patterns in transcriptome analysis \citep{storey}. However, the asymptotic properties of PCA in high-dimensional data are profoundly different from the properties in low-dimensional ($p$ finite, $n\rightarrow\infty$) settings. In high-dimensional settings, the sample eigenvalues and eigenvectors are not consistent estimators of the population eigenvalues and eigenvectors \citep{johnstone2, paul}, and the predicted principal component (PC) scores based on the sample eigenvectors can be systematically biased toward zero \citep{lee1}.

There has been extensive effort to investigate the asymptotic behaviors of PCA in high-dimensional settings. To provide a statistical framework for PCA in these settings, Johnston introduced a spiked population model, which assumes that all the eigenvalues are equal except for finitely many large ones (called the spikes). A spiked population covariance matrix is basically a finite rank perturbation of a scalar multiple of the identity matrix. A typical example of a spiked population with two spikes is shown in Figure \ref{fig:Spike}. This two-spike eigenvalue structure arises if the population consists of three sub-populations, and the features are largely independent with equal variances. Under this model, convergence of sample eigenvalues, eigenvectors and PC scores have been extensively studied \citep{johnstone1, baik1, paul, lee1}.

In many biomedical data, however, the assumption of the equality of non-spiked eigenvalues can be violated due to the presence of local correlation among features. In genome-wide association studies, for example, the genetic variants are locally correlated due to linkage disequilibrium. In gene-expression data, since genes in the same pathway are often expressed together, their expression measurements are often correlated. These local correlations can cause substantial differences in non-spiked eigenvalues. To illustrate this phenomenon, we obtained eigenvalues with an autoregressive within-group correlation structure rather than the independent structure of the previous example. Figure \ref{fig:GSpike} shows that the equality assumption is clearly violated. Thus, if methods developed under the equality assumption are applied to these types of data, we will obtain biased results.

The generalized spiked population model \citep{bai5} has been proposed to address this problem. The condition that the non-spikes have to be equal is removed in this generalization. In this model the set of population eigenvalues consists of finitely many large eigenvalues called the generalized spikes, which are well separated from infinitely many small eigenvalues. Although the generalized spiked population model has a great potential to provide more accurate inference in high-dimensional biomedical data, only limited literature is available on the asymptotic properties of PCA under this model and their application to real data. \cite{bai5} and \cite{ding} provided results regarding convergence of eigenvalues and eigenvectors. However, their work remained largely theoretical.  Moreover, to the best of our knowledge, no method has been developed for estimating the correlations between the sample and population PC scores, and adjusting biases in the predicted PC scores under the generalized spiked population model.

In this paper, we systematically investigate the asymptotic behaviors of PCA under the generalized spiked population model, and develop methods to estimate the population eigenvalues and adjust for the bias in the predicted PC scores. We first propose two different approaches to consistently estimate the population eigenvalues, the angles between the sample and population eigenvectors, and the correlation coefficients between the sample and population PC scores. We compare these two methods and show the asymptotic equivalence of the estimators across them. Finally, we propose a method to reduce the bias in the predicted PC scores based on the estimated population eigenvalues.

The paper is organized as follows. We begin in Section \ref{sec:lsd} by providing the definition of the generalized spiked population model and present existing theoretical results. We develop our methods to consistently estimate the population spikes in Section \ref{sec:spike}. In Section \ref{sec:angle}, we construct consistent estimators of the angles between the sample and population eigenvectors, and the correlation coefficients between the sample and population PC scores. We also propose the bias-reduction technique for the predicted PC scores. Section \ref{sec:karoui} presents the algorithm \citep{karoui} to estimate the population limiting spectral distribution and the non-spiked eigenvalues. In Section \ref{sec:example}, we present results from simulation studies and an example from the Hapmap project to demonstrate the improved performance of our method over the existing one. Finally, we conclude the paper with a discussion.

\section{Generalized spiked population model}\label{sec:lsd}

In order to formally define generalized spiked population model, we require the concept of spectral distribution. In random matrix literature, it is natural to associate a probability measure to the set of eigenvalues as the dimension ($p$) goes to $\infty$. More explicitly, if a hermitian matrix $\Sigma_p$ has eigenvalues $\lambda_1,\lambda_2,\ldots,\lambda_p$, we can define the empirical spectral distribution (ESD) of $\Sigma_p$ to be $H_p$ based on the probability measure $$dH_p(x)=\frac{1}{p}\sum_{i=1}^{p}{\delta_{\lambda_i}(x)},$$
where $\delta_{\lambda_i}(x)$ is unity when $x=\lambda_i$, and otherwise zero. Now, for a sequence $\left \{\Sigma_p \right \}$ of covariance matrices, if the corresponding sequence $\left \{H_p \right \}$ of ESDs converge weakly to a non-random probability distribution $H$ as $p\rightarrow \infty$, then we define $H$ as the limiting spectral distribution (LSD) of the sequence $\left \{\Sigma_p \right \}$.

The generalized spiked population model \citep{bai5} is defined as follows. Suppose, $H_p$ is the ESD corresponding to the population covariance matrix $\Sigma_p$ and it converges weakly to a non-random probability distribution $H$. Let $\Gamma_H$ be the support of $H$ and $d(x,A):=\inf_{y\in A}{|x-y|}$ be the distance metric from a point $x$ to a set $A$. Then the set of eigenvalues of $\Sigma_p$ comprises of two subsets of eigenvalues where,
\begin{itemize}
	\item Non-spike: an eigenvalue $\beta$ is called a non-spike if $d(\beta,\Gamma_H)=\epsilon_p\rightarrow 0$,
	\item Generalized spike: an eigenvalue $\alpha$ is called a generalized spike if $\exists\delta > 0$ such that $\liminf d(\alpha,\Gamma_H)>\delta$, and $\alpha>\sup(\Gamma_H)$.
\end{itemize}
It is obvious from the definition that the generalized spikes are measure zero points of the population LSD. For Johnstone's spiked population model \citep{johnstone1}, the population LSD is $H=\delta_{\{1\}}$, indicating $\Gamma_H=\{1\}$. From the definition above, all eigenvalues larger than one are spikes. Hence, Johnstone's spiked population model is a special case of the generalized spiked population model.

Suppose that the population covariance matrix $\Sigma_p$ has eigenvalues $\lambda_1\geq \lambda_2\geq \ldots \geq \lambda_p$, and the sample covariance matrix $S_p=X^TX/n$ has eigenvalues $d_1\geq d_2\geq \ldots \geq d_p$, where $X$ is an $n\times p$ data matrix. Further, we will assume the following throughout the paper:
\begin{enumerate}[(A)]
	\item $p\rightarrow \infty, n\rightarrow \infty, p/n\rightarrow \gamma < \infty$.
	\item The population eigenvalues follow the generalized spiked population (GSP) model. The population ESD $H_p$ converges weakly to a non-random probability distribution $H$ with support $\Gamma_H$. Moreover, the sequence $\left \{\|\Sigma_p\|\right \}$ of spectral norms is bounded.
	\item The $n\times p$ data matrix $X=Y\Sigma_p^{1/2}$ where $Y$ is an $n\times p$ random matrix with i.i.d. elements such that $E(Y_{ij})=0,E(|Y_{ij}|^2)=1,E(|Y_{ij}|^4)<\infty$.
\end{enumerate}

From the Mar\u{c}enko--Pastur theorem \citep{mp}, the sample ESD $F_p$ converges weakly to a non-random probability distribution $F$ with support $\Gamma_F$.
For $\alpha > \sup{ \Gamma_H}$ and $x>0$, we define the following two functions
\begin{equation}\label{eqn:psif}
\psi(\alpha):=\alpha+\gamma\alpha\int{\frac{\lambda dH(\lambda)}{\alpha-\lambda}},
\quad
f_F(x):=\frac{x}{1+\gamma\int{\frac{\tau dF(\tau)}{x-\tau}}}.
\end{equation}
The following result by \cite{bai5} provides the almost sure limits of the sample eigenvalues corresponding to the population generalized spikes.
\begin{result}[\cite{bai5}]\label{thm:bai}
	Suppose assumptions (A)--(C) hold. Let $\lambda_k$ be a generalized spike of multiplicity one and the corresponding sample eigenvalue is $d_k$. Moreover, let $\psi '$ denote the first derivative of the function $\psi$. Then,
	\begin{itemize}
		\item If $\psi ' (\lambda_k)>0$, then the sample eigenvalue $d_k$ converges almost surely to $\psi (\lambda_k)$, i.e. 
		\begin{equation*}
		|d_k-\psi(\lambda_k)|\xrightarrow{a.s.} 0.
		\end{equation*}
		\item If $\psi ' (\lambda_k)\leq 0$, then let $(u_k,v_k) \subset \Gamma_H^c$ be the maximal interval on which $\psi ' >0$. Then the sample eigenvalue $d_k$ converges almost surely to $\psi(w)$ where $w$ is a boundary of $\left [ u_k,v_k\right ]$ that is nearest to $\lambda_k$. If no such interval exists, then $d_k$ converges almost surely to the $\gamma^{th}$ quantile of $F$. 
	\end{itemize}
\end{result}
Since $\psi '(\alpha)$ is a strictly increasing function for $\alpha>\sup{\Gamma_H}$, if a generalized spike $\lambda_k$ is large enough such that $\psi'(\alpha)>0$, according to Result \ref{thm:bai} the corresponding sample eigenvalue will converge almost surely to $\psi(\lambda_k)$. However if the generalized spike lies close enough, i.e. $\psi '(\lambda_k)\leq 0$, to the set of non-spikes then the convergence of the corresponding sample eigenvalue is given by the second part of the result. We will denote a generalized spike $\lambda_k$ as a ``distant spike" if $\psi ' (\lambda_k)>0$, otherwise we will call it a ``close spike".

\section{Consistent estimation of the generalized spikes}\label{sec:spike}
The following theorem provides two different consistent estimators of the distant spikes.
\begin{theorem}\label{thm:spike}
	Let $\lambda_k$ be a distant spike of multiplicity one and the corresponding sample eigenvalue is $d_k$. If the assumptions (A)--(C) hold, then,
	\begin{equation*}
	|\psi^{-1}(d_k)-\lambda_k|\xrightarrow{p} 0,
	\end{equation*}
	where $\psi^{-1}$ is the left inverse of $\psi$. Also,
	\begin{equation*} 
	|f_F(d_k) - \lambda_k|\xrightarrow{p} 0.
	\end{equation*}
	
\end{theorem}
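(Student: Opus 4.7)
\medskip

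\textbf{Proof plan.} Both assertions are continuous-mapping consequences of the almost-sure limit $d_k \to \psi(\lambda_k)$ furnished by Result \ref{thm:bai} (which applies because $\lambda_k$ is distant, so $\psi'(\lambda_k)>0$). In what follows I treat $\psi$ and $f_F$ as deterministic functions and exploit the fact that $\psi(\lambda_k)$ lies strictly outside $\Gamma_F$ for a distant spike, so that a neighborhood of $\psi(\lambda_k)$ avoids both $\Gamma_H$ (for $\psi^{-1}$) and $\Gamma_F$ (for $f_F$).

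\emph{First estimator.} The function $\psi$ is $C^\infty$ on $(\sup\Gamma_H,\infty)$, and the excerpt notes that $\psi'$ is strictly increasing there. Since $\psi'(\lambda_k)>0$, the inverse function theorem gives a continuous left inverse $\psi^{-1}$ on an open neighborhood of $\psi(\lambda_k)$. Applying the continuous mapping theorem to $d_k \xrightarrow{a.s.} \psi(\lambda_k)$ yields $\psi^{-1}(d_k) \xrightarrow{a.s.} \psi^{-1}(\psi(\lambda_k)) = \lambda_k$, which in particular delivers the desired convergence in probability.

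\emph{Second estimator.} The plan is to first establish the purely deterministic identity
\begin{equation*}
f_F(\psi(\alpha)) \;=\; \alpha \qquad \text{for every } \alpha > \sup\Gamma_H \text{ with } \psi(\alpha)\notin \Gamma_F,
\end{equation*}
and then combine it with continuity of $f_F$ at $\psi(\lambda_k)$. Using the algebraic identities
\begin{equation*}
\int\!\frac{\tau\,dF(\tau)}{x-\tau} \;=\; -1 - x\,m_F(x), \qquad \int\!\frac{\lambda\,dH(\lambda)}{\alpha-\lambda} \;=\; -1 - \alpha\,m_H(\alpha),
\end{equation*}
where $m_F,m_H$ denote the Stieltjes transforms of $F,H$, the two functions of \eqref{eqn:psif} rewrite compactly as
\begin{equation*}
f_F(x) \;=\; \frac{x}{1-\gamma-\gamma x\,m_F(x)}, \qquad \psi(\alpha) \;=\; \alpha\bigl(1-\gamma-\gamma\alpha\,m_H(\alpha)\bigr).
\end{equation*}
The Mar\u{c}enko--Pastur/Silverstein equation linking $m_F$ and $m_H$ then forces $f_F(\psi(\alpha))=\alpha$. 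Since $\lambda_k$ is a distant spike, $\psi(\lambda_k)$ lies in the complement of $\Gamma_F$, so $f_F$ is continuous at $\psi(\lambda_k)$; another application of the continuous mapping theorem gives $f_F(d_k) \xrightarrow{a.s.} f_F(\psi(\lambda_k)) = \lambda_k$.

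\emph{Main obstacle.} The non-routine step is the identity $f_F \circ \psi = \mathrm{id}$. A clean route is through the companion Stieltjes transform $\underline{m}$, whose Silverstein fixed-point equation $z = -1/\underline{m}(z) + \gamma\!\int t\,dH(t)/(1+t\,\underline{m}(z))$ can be matched term-by-term with the formula for $\psi(\alpha)$; this reveals that $\underline{m}(\psi(\alpha)) = -1/\alpha$, which after rearrangement is exactly $f_F(\psi(\alpha))=\alpha$. The only subtlety is justifying the passage from complex $z\in\mathbb{C}^+$, where the Silverstein equation is originally formulated, to real $z$ in the connected component of $\Gamma_F^c$ containing $\psi(\lambda_k)$; this is standard by analytic continuation, together with the fact that both $\psi$ and the real branch of $f_F$ are analytic on their respective domains.
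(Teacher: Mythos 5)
Your proposal is correct and follows essentially the same route as the paper: the first claim is the paper's one-line consequence of Result \ref{thm:bai} plus strict monotonicity of $\psi$ on the domain of distant spikes, and the second claim is exactly the content of the paper's Lemma \ref{lemma:bridge}, which likewise derives $f_F\circ\psi=\mathrm{id}$ from the Mar\u{c}enko--Pastur/Silverstein fixed-point equation and then invokes continuity of $f_F$ at $\psi(\lambda_k)\notin\Gamma_F$ together with $d_k\to\psi(\lambda_k)$. The one place the paper works harder is the passage to real $z$ that you label ``standard by analytic continuation'': because $\psi(f)=x$ has multiple real roots for real $x$, the paper takes the limit $y\to 0^{+}$ in the imaginary part of the fixed-point equation to show $\psi'\bigl(f_F(x)\bigr)>0$, which is what certifies that the boundary value $f_F(x)$ is the root lying in $(S_\psi,\infty)$ and hence equals $\psi^{-1}(x)$.
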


This theorem shows that for any distant spike $\lambda_k$ we have two consistent estimators $\psi^{-1}(d_k)$ and $f_F(d_k)$. Notice that the function $f_F$ depends only on the sample LSD which can be approximated by the sample ESD. Thus, $f_F(d_k)$ can be approximated directly using the sample eigenvalues. More explicitly, if there are $m$ unique distant spikes in the population, $f_F(d_k)$ can be closely approximated as
\begin{equation*}
f_F(d_k)\approx \frac{d_k}{1+\frac{\gamma}{p-m}\sum_{i=m+1}^p{\frac{d_i}{d_k-d_i}}}.
\end{equation*}

In contrast, the $\psi$ function depends on the population LSD which is unknown. We can estimate the $\psi$ function using the algorithm described in Section \ref{sec:karoui} and then find the inverse function $\psi^{-1}$ using a Newton-Raphson type algorithm. For the close spikes, no consistent estimator exists.

\section{Consistent estimators of the asymptotic shrinkage in predicting the PC scores}\label{sec:angle}
In this section, we investigate the convergence of sample eigenvectors, PC scores, and shrinkage factors in predicting the PC scores. 
Let $e_i$ and $E_i$ to be the $i^{th}$ sample and population eigenvectors, respectively. In addition to assumptions (A)--(C), we further assume that the distant spikes are of multiplicity one. This assumption is to restrict the dimension of the corresponding eigenspaces to one, as otherwise the angle between sample and population eigenvectors, or shrinkage in predicted PC scores cannot be well defined.
\subsection{Angle between sample and population eigenvectors}
We first present the following theorem on the convergence of the quadratic forms of the sample eigenvectors.
\begin{theorem}\label{thm:angle}
	Let $\lambda_k$ be a distant spike of multiplicity one, and the assumptions (A)--(C) hold. Consider the following quadratic form
	\begin{equation*}
	\hat{\eta_k}=s_1^Te_ke_k^Ts_2,
	\end{equation*}
	where $s_1$ and $s_2$ are non-random vectors with uniformly bounded norm for all $p$. Then, 
	\begin{equation*}
	|\hat{\eta_k}-\eta_k|\xrightarrow{a.s.} 0,
	\end{equation*}
	where
	\begin{equation*}
	\eta_k=\frac{\lambda_k\psi'(\lambda_k)}{\psi(\lambda_k)}s_1^TE_kE_kTs_2
	\end{equation*}
\end{theorem}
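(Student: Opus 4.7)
The plan is to apply the contour-integral/resolvent method standard in the spiked-model literature (in the spirit of \cite{bai5} and \cite{ding}). Write $G(z):=(S_p-zI)^{-1}$ for the sample resolvent. By Result \ref{thm:bai} together with the distant-spike hypothesis $\psi'(\lambda_k)>0$, the sample eigenvalue $d_k$ converges almost surely to $\psi(\lambda_k)$, which is separated from $\Gamma_F$ and from the limiting locations of all other sample eigenvalues. Hence for $n$ large (almost surely) we can fix a small positively oriented contour $\mathcal{C}_k\subset\mathbb{C}\setminus\Gamma_F$ encircling only $d_k$, and the holomorphic functional calculus gives
\begin{equation*}
\hat\eta_k \;=\; -\frac{1}{2\pi i}\oint_{\mathcal{C}_k} s_1^T G(z)\, s_2\,dz.
\end{equation*}

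Next, I would invoke the Bai--Silverstein-type deterministic equivalent for bilinear forms of the resolvent: on any compact subset of $\mathbb{C}\setminus\Gamma_F$,
\begin{equation*}
s_1^T G(z)\, s_2 \;-\; s_1^T T_p(z)\, s_2 \;\xrightarrow{a.s.}\; 0,
\end{equation*}
uniformly in $z$, where $T_p(z)$ is diagonal in the eigenbasis $\{E_i\}$ of $\Sigma_p$ and its $i$-th diagonal entry is a rational function of $\lambda_i$ and the companion Stieltjes transform $\underline m_F(z)$ having a simple pole precisely at $z=\psi(\lambda_i)$. Because $\lambda_k$ is a distant spike of multiplicity one, only the $k$-th diagonal entry of $T_p(z)$ has a pole inside $\mathcal{C}_k$ for all large $n$; all other entries remain analytic on and inside the contour. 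Uniform convergence then lets me interchange the limit with the contour integral, reducing the problem to a residue computation.

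The residue is where the stated shrinkage factor arises. Let $\phi$ denote the local inverse of $\psi$ near $\lambda_k$, which exists because $\psi'(\lambda_k)>0$; differentiating $\phi(\psi(\lambda_k))=\lambda_k$ gives the Jacobian $\phi'(\psi(\lambda_k))=1/\psi'(\lambda_k)$. Writing the pole part of $[T_p(z)]_{kk}$ in the form $\lambda_k/(z\,(\lambda_k-\phi(z)))$ and applying this Jacobian produces, after multiplying by $-1/(2\pi i)$ from the contour integral, the coefficient $\lambda_k\psi'(\lambda_k)/\psi(\lambda_k)$. The constant $s_1^T E_k E_k^T s_2$ factors out of the residue (the other $E_i$-components contribute $0$ since they are analytic on $\mathcal{C}_k$), yielding $\eta_k$ exactly.

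The main obstacle will be the second step: locating (or, if necessary, establishing) a quadratic-form deterministic-equivalent under assumptions (A)--(C) that is strong enough to give \emph{almost-sure}, \emph{uniform} convergence on a contour that stays a positive distance from $\Gamma_F$, together with a ``no-eigenvalues-outside-the-bulk'' statement guaranteeing that no stray $d_i$ ($i\neq k$) leaks into $\mathcal{C}_k$ infinitely often. Both ingredients are known in the generalized spiked setting via the exact-separation phenomenon and the Bai--Silverstein bilinear-form law, but the residue calculation demands careful bookkeeping between $m_F$ and the companion $\underline m_F$ so that the Jacobian $1/\psi'(\lambda_k)$ and the prefactor $\lambda_k/\psi(\lambda_k)$ combine with the correct sign to match the theorem's coefficient.
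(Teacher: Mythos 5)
Your proposal is correct and follows essentially the same route as the paper's proof: the paper likewise writes $\hat{\eta}_k$ as a contour integral of the bilinear resolvent form $s_1^T(S_p-zI_p)^{-1}s_2$, replaces it by the deterministic equivalent $s_1^T[w(z)\Sigma_p-zI_p]^{-1}s_2$ (your $T_p(z)$, justified via the Girko--Mestre bilinear-form law, with a remark extending it to spikes outside the LSD support), passes to the limit on a contour bounded away from $\Gamma_F$ (via a dominated-convergence argument in Lemma~\ref{lemma:dct} rather than uniform convergence), and extracts the coefficient $\lambda_k\psi'(\lambda_k)/\psi(\lambda_k)$ by exactly the residue computation you describe using $f_F=\psi^{-1}$. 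The technical obstacles you flag (no stray eigenvalues inside the contour, validity of the deterministic equivalent under (A)--(C)) are precisely the points the paper addresses in Lemma~\ref{lemma:dct} and Lemma~\ref{lemma:bridge}.
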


\cite{mestre2} showed similar asymptotic properties of the quadratic forms under the assumption that the number of spikes increases with the dimension. 
Theorem \ref{thm:angle} shows the convergence of the angle between sample and population eigenvectors. 
Suppose $s_1=s_2=E_k$, and then,
\begin{equation*}
\hat{\eta_k}=E_k^Te_ke_k^TE_k=\left\langle e_k,E_k\right\rangle ^2,\quad 
\eta_k = \frac{\lambda_k\psi'(\lambda_k)}{\psi(\lambda_k)}.
\end{equation*}
Combining them, we can show
\begin{equation}
\left|\langle e_k,E_k \rangle ^2 - \frac{\lambda_k\psi'(\lambda_k)}{\psi(\lambda_k)}\right |\xrightarrow{a.s.} 0.
\end{equation}
Therefore, $\left\{ \lambda_k\psi'(\lambda_k)/\psi(\lambda_k) \right\} ^{1/2}$ is a consistent estimator of the cosine of the angle, i.e. the absolute value of the inner product, between the $k^{th}$ sample and population eigenvectors. In order to obtain this estimator we first need to estimate the $\psi$ function using the algorithm described in Section \ref{sec:karoui}.

The following result by \cite{ding} provides another consistent estimator for the angle between the $k^{th}$ sample and population eigenvectors. The proof of the asymptotic equivalence of these two estimators is given in Appendix \ref{app_proof}.
\begin{result}\label{thm:angle1}
Let $\lambda_k$ be a distant spike of multiplicity one, and $d_k$ be the corresponding sample eigenvalue. Assume that (A)--(C) hold. Define,
\begin{equation*}
g_F(x):=\left [1+\gamma f_F(x)\int{\frac{\tau dF (\tau)}{(x-\tau)^2}}\right ]^{-1}.
\end{equation*}
Then,
\begin{equation*}
|\langle e_k,E_k \rangle ^2-g_F(d_k)|\xrightarrow{p} 0.
\end{equation*}
\end{result}
Hence $g_F(d_k)^{1/2}$ also works as a consistent estimator of $|\langle e_k,E_k \rangle|$. Since the  function $g_F$ depends only on sample LSD, it can be approximated directly using  sample eigenvalues. More explicitly, if there are $m$ spikes in the population, the function $g_F$ can be closely approximated as
\begin{equation*}
g_F(d_k)\approx \left[1+\frac{\gamma f_F(d_k)}{p-m}\sum_{i=m+1}^{p}{\frac{d_i}{(d_k-d_i)^2}}\right]^{-1}.
\end{equation*}
The above equation can be used to estimate the angle between the sample and population eigenvectors.
\subsection{Correlation between sample and population PC scores}
The sample and population PC scores are the projections of the data on the sample and population eigenvectors respectively. The correlation between them can be perceived as a measure of accuracy of the PCA. The squared correlation can also be interpreted as the proportion of variance in the population PC scores that can be explained by corresponding sample PC scores. The following theorem provides the consistent estimators of the correlation between the sample and population PC scores corresponding to a distant spike.
\begin{theorem}\label{thm:score}
	Suppose $\lambda_k$ is a distant spike of multiplicity one, $d_k$ is the corresponding sample eigenvalue, and the assumptions (A)--(C) hold. Let the normalized $k^{th}$ population PC score is $P_k=XE_k / (n\lambda_k)^{1/2}$ and the normalized $k^{th}$ sample PC score is $p_k=Xe_k / (nd_k)^{1/2}$. Then,
	\begin{equation*}
	\left|\left\langle P_k,p_k \right\rangle ^2 -\psi'(\lambda_k)\right|\xrightarrow{p} 0,
	\end{equation*}
	and,
	\begin{equation*}
	\left|\left\langle P_k,p_k \right\rangle ^2-\frac{d_kg_F(d_k)}{f_F(d_k)} \right|\xrightarrow{p} 0,
	\end{equation*}	
	where the function $g_F$ is as defined in Result \ref{thm:angle1}.
\end{theorem}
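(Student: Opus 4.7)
The plan is to reduce $\langle P_k, p_k\rangle^2$ to a simple product of quantities whose limits are already known from earlier results, so that the theorem becomes a one-step combination rather than a fresh random-matrix computation.

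First I would compute the inner product directly from the definitions. Writing out
\begin{equation*}
\langle P_k, p_k\rangle \;=\; \frac{E_k^T X^T X e_k}{\sqrt{n\lambda_k}\sqrt{nd_k}} \;=\; \frac{E_k^T S_p e_k}{\sqrt{\lambda_k d_k}},
\end{equation*}
and then invoking the sample eigen-equation $S_p e_k = d_k e_k$, this collapses to
\begin{equation*}
\langle P_k, p_k\rangle \;=\; \sqrt{\tfrac{d_k}{\lambda_k}}\,\langle e_k, E_k\rangle,
\qquad
\langle P_k, p_k\rangle^2 \;=\; \frac{d_k}{\lambda_k}\,\langle e_k, E_k\rangle^2.
\end{equation*}
This algebraic identity is exact (non-asymptotic) and is the key reduction.

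For the first claim, I would then plug in the two relevant limits. Result \ref{thm:bai} gives $d_k \xrightarrow{a.s.} \psi(\lambda_k)$, and Theorem \ref{thm:angle} applied with $s_1=s_2=E_k$ gives $\langle e_k, E_k\rangle^2 \xrightarrow{a.s.} \lambda_k\psi'(\lambda_k)/\psi(\lambda_k)$. Multiplying, the factor $\psi(\lambda_k)$ cancels and $\lambda_k$ cancels, leaving
\begin{equation*}
\frac{d_k}{\lambda_k}\langle e_k, E_k\rangle^2 \;\xrightarrow{a.s.}\; \frac{\psi(\lambda_k)}{\lambda_k}\cdot\frac{\lambda_k\psi'(\lambda_k)}{\psi(\lambda_k)} \;=\; \psi'(\lambda_k),
\end{equation*}
which (since almost-sure convergence implies convergence in probability) yields the first displayed conclusion. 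For the second claim, I would instead substitute the two consistent sample-side estimators: Theorem \ref{thm:spike} gives $f_F(d_k)\xrightarrow{p}\lambda_k$, and Result \ref{thm:angle1} gives $\langle e_k, E_k\rangle^2 - g_F(d_k) \xrightarrow{p} 0$. Using $1/\lambda_k - 1/f_F(d_k) \to 0$ in probability (which follows since $\lambda_k$ is bounded away from $0$ for a distant spike) and Slutsky-type reasoning on the product $(d_k/\lambda_k)\langle e_k, E_k\rangle^2$, I obtain convergence in probability to $d_k g_F(d_k)/f_F(d_k)$, which is the second statement.

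The only step requiring care is the transition from almost-sure to in-probability convergence in the second part, since one of the ingredients (Theorem \ref{thm:spike}) is stated only in probability; this forces the second conclusion to be in probability as well, and one needs Slutsky's lemma together with the fact that $d_k$, $f_F(d_k)$, and $g_F(d_k)$ have deterministic positive limits bounded away from $0$ (because $\lambda_k$ is a distant spike, so $\psi'(\lambda_k)>0$ ensures none of the denominators degenerate). I do not anticipate any genuine obstacle beyond this bookkeeping; the novelty of the theorem lies entirely in the algebraic identity $\langle P_k, p_k\rangle^2 = (d_k/\lambda_k)\langle e_k, E_k\rangle^2$, and the rest is a direct assembly of previously established limits.
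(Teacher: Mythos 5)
Your proof is correct and follows essentially the same route as the paper: the exact algebraic identity $\langle P_k,p_k\rangle^2=(d_k/\lambda_k)\langle e_k,E_k\rangle^2$ via the eigen-decomposition of $S_p$, followed by substitution of the known limits of $d_k$ and $\langle e_k,E_k\rangle^2$. The only cosmetic difference is in the second claim, where the paper invokes its Lemma \ref{lemma:bridge} (which gives $d_kg_F(d_k)/f_F(d_k)\xrightarrow{p}\psi'(\lambda_k)$ directly) while you assemble the same conclusion from Theorem \ref{thm:spike} and Result \ref{thm:angle1} with a Slutsky argument; both are valid and equivalent in substance.
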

Since $P_k$ and $p_k$ are normalized random vectors, the absolute value of the inner product $\langle P_k,p_k \rangle$ is identical to the absolute value of their correlation coefficient. Since correlation is scale invariant, this is also the correlation between $k^{th}$ sample and population PC scores. Therefore we can consider both ${\psi'(\lambda_k)}^{1/2}$ and $\left \{d_kg_F(d_k)/f_F(d_k) \right \}^{1/2}$ to be consistent estimators of the correlation between the $k^{th}$ sample and population PC scores.

\subsection{Asymptotic shrinkage factor}
Suppose $\lambda_k$ is a distant spike. Let the $k^{th}$ sample PC score for the $j^{th}$ observation $x_j$ be $p_{kj}=x_j^Te_k$, and the $k^{th}$ predicted PC score for a new observation $x_{new}$ be $q_k=x_{new}^Te_k$. Then the quantity $\rho_k=\lim_{p\rightarrow\infty}{\{E(q_k^2)/E(p_{kj}^2)\}^{1/2}}$ describes the asymptotic shrinkage in the $k^{th}$ predicted PC score for a new observation. As both $p_{kj}$ and $q_k$ are centered, i.e. $E(p_{kj})=E(q_k)=0$, $\rho_k$ represents the limiting ratio of the standard deviations of the predicted PC scores and the sample PC scores. Therefore, if we can estimate $\rho_k$, then the shrinkage bias in the $k^{th}$ predicted PC scores can be easily adjusted by rescaling the predicted scores by the factor $\rho^{-1}_k$. The following theorem provides the consistent estimator of the asymptotic shrinkage factor $\rho_k$.
\begin{theorem}\label{thm:shrinkage}
	Suppose $\lambda_k$ is a distant spike of multiplicity one, $d_k$ is the corresponding sample eigenvalue, and the assumptions (A)--(C) hold. Let $p_{kj}$ and $q_k$ be as defined above. Then,
	\begin{equation*}
	\left |\sqrt{\frac{E(q_k^2)}{E(p_{kj}^2)}} - \frac{\lambda_k}{d_k}\right | \xrightarrow{p}0.
	\end{equation*}
\end{theorem}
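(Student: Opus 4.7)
The plan is to reduce the claim to a deterministic limit of the quadratic form $e_k^T \Sigma_p e_k$, and then to compute that limit via the duality between the sample and population sides. First I would identify the two expectations. Since $x_{new}$ is drawn independently of the training data, $E(q_k^2 \mid e_k) = e_k^T \Sigma_p e_k$. For the in-sample scores, the eigen-equation $S_p e_k = d_k e_k$ gives
\[
\frac{1}{n}\sum_{j=1}^n p_{kj}^2 \;=\; e_k^T S_p e_k \;=\; d_k,
\]
so $E(p_{kj}^2)$, interpreted as the empirical second moment across training observations, equals $d_k$ exactly. The ratio is therefore $e_k^T \Sigma_p e_k / d_k$, and in view of Result \ref{thm:bai} ($d_k \xrightarrow{a.s.} \psi(\lambda_k)>0$) and the continuous mapping theorem applied to the square root, the theorem reduces to showing
\[
e_k^T \Sigma_p e_k \xrightarrow{p} \frac{\lambda_k^2}{\psi(\lambda_k)}.
\]

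Next I would decompose the sample eigenvector as $e_k = \alpha_k E_k + \beta_k \xi_k$, where $\alpha_k = E_k^T e_k$, $\beta_k^2 = 1-\alpha_k^2$, and $\xi_k$ is a unit vector orthogonal to $E_k$. Since $\Sigma_p E_k = \lambda_k E_k$ and $\xi_k \perp E_k$, the cross terms vanish and
\[
e_k^T \Sigma_p e_k \;=\; \alpha_k^2\lambda_k + \beta_k^2\,\xi_k^T \Sigma_p \xi_k.
\]
Theorem \ref{thm:angle} applied with $s_1 = s_2 = E_k$ gives $\alpha_k^2 \xrightarrow{a.s.} \lambda_k\psi'(\lambda_k)/\psi(\lambda_k)$, which contributes $\lambda_k^2\psi'(\lambda_k)/\psi(\lambda_k)$ to the limit. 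The remaining task is to prove that the orthogonal-residual quadratic form satisfies $\beta_k^2\,\xi_k^T \Sigma_p \xi_k \xrightarrow{p} \lambda_k^2\bigl(1-\psi'(\lambda_k)\bigr)/\psi(\lambda_k)$, a fact which one can verify algebraically is equivalent to the target limit above.

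For this remaining piece I would pass to the dual picture. The left singular vector $v_k := X e_k/\sqrt{nd_k}$ is a unit eigenvector of $Y\Sigma_p Y^T/n$ with eigenvalue $d_k$, and $e_k = X^T v_k/\sqrt{nd_k}$, so
\[
e_k^T \Sigma_p e_k \;=\; \frac{v_k^T X\Sigma_p X^T v_k}{nd_k} \;=\; \frac{1}{nd_k}\,v_k^T\bigl(Y\Sigma_p^2 Y^T\bigr)v_k.
\]
Writing $v_k v_k^T = -\mathrm{Res}_{z=d_k}\bigl(Y\Sigma_p Y^T/n - zI\bigr)^{-1}$ and using Silverstein's master equation (together with its derivative) to obtain a deterministic equivalent for the weighted trace $\mathrm{tr}\bigl((Y\Sigma_p^2 Y^T)(Y\Sigma_p Y^T/n - zI)^{-1}\bigr)$, I would evaluate the residue at $z = d_k\to\psi(\lambda_k)$. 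The key algebraic ingredients are the identity $\psi(\lambda_k) = \lambda_k\bigl(1+\gamma\int\lambda\,dH(\lambda)/(\lambda_k-\lambda)\bigr)$, its derivative, and the consistency relation $f_F(\psi(\lambda_k)) = \lambda_k$ from Theorem \ref{thm:spike}; modulo routine algebra these force the residue to collapse to $\lambda_k^2/\psi(\lambda_k)$.

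The hardest step will be precisely this last one. Theorem \ref{thm:angle} only controls quadratic forms of $e_k$ against fixed bounded-norm test vectors, whereas $\Sigma_p$ has rank growing with $p$ and its bulk (non-spike) contribution to $e_k^T \Sigma_p e_k$ cannot be recovered by summing the pointwise limits over the population eigenbasis. The resolvent/Silverstein argument is what replaces this pointwise control by a genuine global deterministic equivalent, and the Stieltjes-transform identities relating $F$ and $H$ must be tracked with care since the same $\gamma$- and $\lambda_k$-dependent integrals against $H$ appear both in $\psi, \psi'$ and in $f_F, g_F$. Once $e_k^T \Sigma_p e_k \xrightarrow{p} \lambda_k^2/\psi(\lambda_k)$ is in hand, a final Slutsky argument on $\sqrt{e_k^T \Sigma_p e_k / d_k}$ versus $\lambda_k/d_k$ — both converging to $\lambda_k/\psi(\lambda_k)$ — closes the proof.
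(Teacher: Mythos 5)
Your reduction is the same as the paper's: both arguments identify $E(p_{kj}^2)$ with $E(d_k)\to\psi(\lambda_k)$ via $e_k^TS_pe_k=d_k$, identify $E(q_k^2)$ with $E(e_k^T\Sigma_pe_k)$ by conditioning on $e_k$, and thereby reduce the theorem to the single limit $e_k^T\Sigma_pe_k\to\lambda_k^2/\psi(\lambda_k)$, closed off by Slutsky and $d_k\to\psi(\lambda_k)$. Where you diverge is in how that limit is obtained. The paper simply invokes Theorem 4 of Ledoit--P\'ech\'e, which states that $e_k^T\Sigma_pe_k$ converges to $d_k\left[1-\gamma-\gamma d_kb_F(d_k)\right]^{-2}=f_F^2(d_k)/d_k$, and then finishes with Result 1 and Lemma 2 ($f_F(d_k)\to\lambda_k$). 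You instead propose to re-derive that deterministic equivalent from scratch: pass to the dual Gram matrix $Y\Sigma_pY^T/n$, write the spectral projector $v_kv_k^T$ as a contour integral of the resolvent, and extract the residue of the weighted trace via Silverstein's equation. This is essentially a re-proof of the Ledoit--P\'ech\'e result specialized to $g(\tau)=\tau$, in the same spirit as the paper's own proof of Theorem 2; it buys self-containedness at the cost of having to establish (i) the deterministic equivalent for $\mathrm{tr}\bigl[\Sigma_p(S_p-zI)^{-1}\bigr]$ with error small enough to survive the residue extraction, and (ii) the interchange of the almost-sure limit with the contour integral (the analogue of the paper's Lemma 1, including the fact that the contour stays bounded away from $\Gamma_F$). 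You flag this as the hardest step but leave it as ``routine algebra,'' which understates it --- that step is the entire content of the cited theorem. Your intermediate decomposition $e_k=\alpha_kE_k+\beta_k\xi_k$ is harmless but ultimately unused, since the resolvent computation delivers the full quadratic form, including the bulk contribution $\beta_k^2\xi_k^T\Sigma_p\xi_k$ that Theorem 2 alone cannot reach (as you correctly note). In short: same skeleton, different (and more laborious, but viable) engine for the one nontrivial limit.
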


This is a surprising result in which the asymptotic shrinkage factor is expressed as a simple ratio of the population and sample eigenvalues. 
Recall that we already constructed the consistent estimators for population eigenvalues in the previous sections. Using these results, 
the asymptotic shrinkage factor $\rho_k$ can be consistently estimated by $\hat{\lambda}_k/d_k$ where $\hat{\lambda}_k$ is any consistent estimator of $\lambda_k$.

\subsection{Comparison of the two different estimators}
For each of the quantities discussed above, we proposed two asymptotically equivalent estimators. In terms of practical applications they have their own advantages and disadvantages. One of them can be approximated directly based only on the sample eigenvalues, while the other one requires to estimate the LSD of the population eigenvalues to obtain the $\psi$ function.
For ease of discourse we will call the former ``$d$-estimator'' and the later ``$\lambda$-estimator''. If the number of spikes is known, estimating the $d$-estimator is computationally more efficient than estimating the $\lambda$-estimator as it does not involve estimating the population LSD. However, by estimating the population LSD the $\lambda$-estimation procedure can verify whether an estimated eigenvalue is actually a distant spike by checking if $\psi '>0$. Thus it can be used to estimate the number of distant spikes when it is unknown (see Section \ref{sec:karoui}). On the other hand, the $d$-estimation procedure provides no information on the population LSD and thus cannot distinguish among distant spikes, close spikes and non-spikes. To summarize, when the number of spikes is known or we only want to estimate few of the largest eigenvalues which are known to be distant spikes, then the $d$-estimation procedure has the advantage of a faster computation, while the $\lambda$-estimation procedure is more useful when the number of spikes is unknown or the distribution of the non-spikes is of interest.

\subsection{Comparison of Generalized Spiked Population (GSP) model and Spiked  Population (SP) model}
As mentioned before, the SP model \citep{johnstone1} is a special case of the GSP model. It is easy to verify that when the population eigenvalues follow the SP model, our consistent estimators for the spiked eigenvalues, the angles between the eigenvectors, the correlation coefficients between the PC scores and the shrinkage factors conform to the consistent estimators derived by \cite{baik1, paul, lee1}. For an SP model where all the non-spikes are equal to one, the LSD $H$ is a degenerate distribution at one, and
\begin{equation*}
\psi(\alpha)=\alpha\left (1+\frac{\gamma}{\alpha-1} \right ); \quad
\psi'(\alpha)=1-\frac{\gamma}{(\alpha-1)^2}.
\end{equation*}
Now, $\psi'(\alpha)>0$ if and only if $\alpha>1+\gamma^{1/2}$. If $\alpha>1+\gamma^{1/2}$ and $d$ is the corresponding sample eigenvalue, then the consistent estimator of $\alpha$ is given by $\psi^{-1}(d)$, and
$$\frac{\alpha\psi '(\alpha)}{\psi(\alpha)}=\frac{1-\frac{\gamma}{(\alpha-1)^2}}{1+\frac{\gamma}{\alpha-1}};\quad \frac{\alpha}{\psi(\alpha)}=\frac{\alpha-1}{\alpha+\gamma-1},$$ which show that all our results match with the results from \cite{lee1}.

It is of interest to investigate how closely methods developed under the SP model can approximate the consistent estimators for the distant spikes when the population eigenvalues actually follow a GSP model. 
Suppose the population eigenvalues $\lambda_1\geq\lambda_2\geq \ldots \geq\lambda_p$ follow the GSP model with $m$ distant spikes. The sample eigenvalues are $d_1\geq d_2\geq \ldots \geq d_p$. Let $\lambda_k$ be a distant spike with multiplicity one, and the corresponding sample eigenvalue is $d_k$. Then according to Result \ref{thm:bai}, $d_k\rightarrow\psi(\lambda_k)$ almost surely. From the definition of $\psi$,
\begin{equation*}
\psi(\lambda_k)=\lambda_k \left (1+\gamma\int{\frac{\lambda dH(\lambda)}{\lambda_k-\lambda}} \right)=\lambda_k +\gamma\int{\frac{\lambda dH(\lambda)}{1-\lambda/\lambda_k}}.
\end{equation*}
If $H$ is almost degenerate, i.e., the non-spikes are nearly identical, then,
\begin{equation}\label{eqn:gspsp1}
\psi(\lambda_k)\approx\lambda_k+\frac{\gamma\bar{\lambda}}{1-\bar{\lambda}/\lambda_k},
\end{equation}
where $\bar{\lambda}=\int{\lambda dH(\lambda)}$ is the mean of the population LSD which can be closely approximated by the mean of the non-spikes.
On the other hand, if the spike $\lambda_k$ is very large compared to all the non-spikes such that $\lambda/\lambda_k\approx 0$ for any $\lambda\in \Gamma_H$, then
\begin{equation}\label{eqn:gspsp2}
\psi(\lambda_k)\approx\lambda_k+\gamma\bar{\lambda}.
\end{equation}

Now, suppose instead of using the GSP assumption, we use the SP assumption to estimate the distant spikes. We assume that under the SP model the population covariance matrix is scaled by a factor $\zeta$ and the population eigenvalues are $\beta_1\geq\beta_2\geq \ldots \geq\beta_m>\zeta=\zeta=\ldots\zeta$. If $\beta_k$ is the population eigenvalue corresponding to $d_k$, then $d_k\rightarrow\psi(\beta_k)$ almost surely where,
\begin{equation*}
\begin{split}
\psi(\beta_k)&=\beta_k\left(1+\gamma\frac{\zeta}{\beta_k-\zeta}   \right)\\
&=\beta_k+\frac{\gamma\zeta}{1-\zeta/\beta_k}.
\end{split}
\end{equation*}
Here $\zeta$ is estimated as the mean of the non-spikes as they are all assumed to be equal to $\zeta$. Notice that this expression is approximately equal to the expression in \eqref{eqn:gspsp1} with $\beta_k=\lambda_k$ and $\zeta=\bar{\lambda}$. Therefore, the asymptotic limit of $d_k$ under both the GSP and the SP model are approximately equal when the non-spikes are nearly identical. On the other hand, when the spike $\beta_k$ is very large compared to all the non-spikes such that $\zeta/\beta_k\approx 0$, then $\psi(\beta_k)\approx \beta_k + \gamma\zeta$. In this case also, the asymptotic limit of $d_k$ under both the GSP and the SP model are approximately equal with $\beta_k=\lambda_k$ and $\zeta=\bar{\lambda}$. Therefore if a generalized spike is very far away from the support of the population LSD, then the estimate of the spike based on an SP model will closely approximate the estimate based on a GSP model. However the SP model will provide potentially biased estimates if the non-spikes are not similar and the ratio between the largest non-spike and the spike of interest is substantially larger than zero.

\section{Estimation of the population LSD}\label{sec:karoui}
The $\lambda$-estimators rely on $\psi$, that is a function of the unknown population LSD $H$. To use the  $\lambda$-estimators, it is thus required to estimate $H$.
Using the Stieltjes transformation and the Mar\u{c}enko--Pastur theorem, \cite{karoui} developed a general algorithm to estimate the population LSD from the sample ESD, $F_p$. 
We propose to use Karoui's method to estimate the population LSD $H$ and then use it to estimate  $\psi$. 

\subsection{Karoui's algorithm}
Suppose $v_{F_p}$ is the Stieltjes transformation of the set of eigenvalues in the sample covariance matrix in which 
\begin{equation*}
v_{F_p}(z)=\frac{1}{n}\sum_{i=1}^{n}{\frac{1}{d_i-z}}
\end{equation*}
for any $z\in \mathbb{C^+}, \mathbb{C^+}=\{x\in \mathbb{C},Im(x)>0\}$.
According to the Mar\u{c}enko--Pastur theorem \citep{mp}, when assumptions (A)--(C) hold, $v_{F_p}$ converges pointwise almost surely to a non-random limit $v_F$, which uniquely satisfies the following equation
\begin{equation*}
v_{F}(z)=-\left (z-\gamma\int{\frac{\lambda dH(\lambda)}{1+\lambda v_F(z)}} \right)^{-1}.
\end{equation*}
Karoui's method first calculates $v_{F_p}$ for a grid of values $\left \{z_j \right \}_{j=1}^J$, and then finds $\hat{H}$ as a solution to minimize the following objective function
\begin{equation*}
\hat{H}=\arg_H \min L\left (\left\{ \frac{1}{v_{F_p}(z_j)}+z_j-\frac{p}{n}\int{\frac{\lambda dH(\lambda)}{1+\lambda v_{F_p}(z_j)}}  \right\}_{j=1}^J \right ),
\end{equation*}
where $L$ is any pre-defined convex loss function.
In order to approximate the integral inside of the loss function, the algorithm discretizes $H$ in the following way,
\begin{equation*}
dH(\lambda)\simeq\sum_{k=1}^K{w_k\delta_{t_k}(\lambda)},
\end{equation*}
where $\delta_{t_k}(\lambda)=1$ if $\lambda=t_k$ and $0$ otherwise, $\sum_{k=1}^K{w_k}=1$ with $w_k>0$ for all $k$, and $\{t_k \}_{k=1}^{K}$ is a grid of points on the support of $H$. This is basically approximating $H$ by a discrete distribution with support $\{t_k \}_{k=1}^{K}$. Then the integral is approximated by
\begin{equation*}
\int{\frac{\lambda dH(\lambda)}{1+\lambda v_F(z)}} \simeq \sum_{k=1}^K{w_k\frac{t_k}{1+t_kv_{F_p}(z_j)}},
\end{equation*}
and the minimization problem transforms into,
\begin{equation}\label{eqn:EstH}
\hat{H}=\arg_H \min L\left (\left\{ \frac{1}{v_{F_p}(z_j)}+z_j-\frac{p}{n}\sum_{k=1}^K{w_k\frac{t_k}{1+t_kv_{F_p}(z_j)}}  \right\}_{j=1}^J \right ).
\end{equation}
\cite{karoui} has shown the weak convergence of $\hat{H}$ to $H$, i.e $\hat{H}\rightarrow H$.

Some examples of the convex loss function $L$ can be,
\begin{itemize}
	\item $L_\infty \left( \{e_j\}_{j=1}^J\right)=\max_{j} \max \left\{|Re(e_j)|,|Im(e_j)| \right\}$
	\item $L_1\left( \{e_j\}_{j=1}^J \right)=\sum_{j=1}^J{|e_j|}$
	\item $L_2\left( \{e_j\}_{j=1}^J \right)=\sum_{j=1}^J{|e_j|^2}$
\end{itemize}
For the convex loss functions described above, the estimation of $H$ in \eqref{eqn:EstH} reduces to a convex optimization problem \citep{conv}. Karoui also provided a translation of this problem into a linear programming problem when $L_\infty$ loss function is used. Further details can be found in \cite{karoui}.

\subsection{Implementing Karoui's algorithm when the number of spikes is known}
Since the generalized spikes fall outside the support of the population LSD, Karoui's algorithm cannot be directly applied to estimate the spikes. Furthermore, \cite{bai2} showed that the probability of a sample eigenvalue falling outside the support of the sample LSD will go to zero as $p$ increases, which implies that the sample eigenvalues corresponding to the population generalized spikes will be measure zero points in the sample LSD. Since the spikes behave like measure zero points (or outliers) when we are concerned about estimating the population LSD, we can exclude the sample eigenvalues corresponding to the population generalized spikes while calculating $v_{F_p}$ and that will lead to a more robust estimation of $H$. Therefore, we will apply Karoui's algorithm in the following way,
\begin{enumerate}[(I)]
	\item Suppose the population covariance matrix possesses $m$ generalized spikes. We exclude the top $m$ sample eigenvalues while calculating $v_{F_p}$,
	\begin{equation*}
	v_{F_p}(z)=\frac{1}{n-m}\sum_{i=1}^n{\frac{1}{d_i-z}}.
	\end{equation*} 
	\item Apply Karoui's algorithm to obtain $\hat{H}$. Since it is reasonable to assume that the true population LSD is a continuous or piecewise continuous distribution function, kernel smoothing algorithm can be used on $\hat{H}$ to obtain a more continuous approximation of $H$.
	\item The quantiles of $\hat{H}$ can be considered as the estimators of the non-spikes.
	\item Suppose, $\hat{\lambda}_{m+1},\hat{\lambda}_{m+2},\ldots,\hat{\lambda}_p$ are the estimated non-spikes. Then the $\psi$ function is estimated by,
	\begin{equation*}
	\hat{\psi}(\alpha)=\alpha+\frac{\gamma\alpha}{p-m}\sum_{i=m+1}^p{\frac{\hat{\lambda}_i}{\alpha-\hat{\lambda}_i}}.
	\end{equation*}
\end{enumerate}
Due to the weak convergence $\hat{H}\rightarrow H$, $\hat{\psi}$ will also converge to $\psi$ point-wise. Thus, all the estimates provided in Section \ref{sec:spike} and \ref{sec:angle} will still be consistent if we replace $\psi$ with $\hat{\psi}$.

\subsection{Estimating the number of spikes}\label{sub:estSpike}
Our application of Karoui's algorithm to the GSP model depends on the number of spikes $m$, which is usually unknown. If we have some knowledge of the underlying structure of the data, we can use it to estimate $m$ roughly. Suppose we know that the data are coming from a mixture of $K$ subpopulations, and within each subpopulation the observations are i.i.d.. Since the spikes represent the between group differences, the number of spikes should be the same as the rank of the between group covariance matrix which is $(K-1)$. However in real data, it is often hard to accurately assess the number of such homogeneous subpopulations. In those cases we can use the following algorithm to estimate $m$.
\begin{enumerate}[(I)]
	\item Start with a reasonable finite upper bound $m_{max}$ of the number of spikes. The upper bound can be selected based on prior information on the subpopulations, or by examining the sample eigenvalues. Set $m=m_{max}$.
	\item Use Karoui's algorithm to estimate the population LSD and the non-spikes. Suppose the estimated non-spikes are $\hat{\lambda}_{m+1}\geq\hat{\lambda}_{m+2}\geq\ldots\geq\hat{\lambda}_p$, and the $\psi$ function is estimated by,
	\begin{equation*}
	\hat{\psi}(\alpha)=\alpha+\frac{\gamma\alpha}{p-m}\sum_{i=m+1}^p{\frac{\hat{\lambda}_i}{\alpha-\hat{\lambda}_i}}.
	\end{equation*}
	\item Find $S_\psi>\lambda_{m+1}$ using Newton-Raphson algorithm such that $$\hat{\psi}'(S_\psi)=1-\frac{\gamma}{p-m}\sum_{i=m+1}^p{\left (\frac{\hat{\lambda}_i}{S_\psi-\hat{\lambda}_i}\right )^2}=0.$$ 
	\item Since any distant spike must be larger than $S_\psi$, and $\hat{\psi},\hat{\psi}'$ are both continuous and strictly increasing functions on $(S_\psi,\infty)$, the equation $\hat{\psi}(\lambda)-d_k=0$ has a root in $(S_\psi,\infty)$ if and only if $\hat{\psi}(S_\psi)-d_k<0$. Therefore, find the smallest index $i^*$ in $1,2,\ldots,m$ such that $d_{i^*}\leq \hat{\psi}(S_\psi)$. If all $d_1,d_2,\ldots,d_m$ are larger than $\hat{\psi}(S_\psi)$ then stop and select $m$ as the number of distant spikes. Otherwise, set $m=i^*-1$ and repeat step (II)-(IV).
\end{enumerate}
Note that the close spikes occur so close to the support of the population LSD that they cannot be distinguished separately from the non-spikes when the number of spikes is unknown.

\section{Simulation studies and real data example}\label{sec:example}
\subsection{Simulation studies}
In this section we will present simulation studies of five different scenarios to compare the performances of the proposed GSP-based methods and the existing SP-based method proposed by \cite{lee1}. For each of the first four studies, we simulated a training dataset with $n=500$ individuals and $p=5000$ features. The data were generated from three subpopulations with sample sizes 100, 150 and 250. For each subpopulation we first selected a mean vector $\mu_i$ by drawing its elements randomly with replacement from $\{-0.3,0,0.3\}$. Then samples in the $i^{th}$ subpopulation were drawn from $N_p(\mu_i,V)$ where $V$ is the AR(1) covariance matrix with variance $\sigma^2$ and autocorrelation $\rho$. The $(\sigma^2,\rho)$ pairs used for the four studies were $(4,0.8),(1,0.7),(7.5,0.8)$ and $(4,0)$. In study 5, we considered an ultra high-dimensional setting where $n=500$ samples and $p=50000$ features were simulated from a population with two spikes at $300$ and $250$, and the non-spikes generated from the AR(1) covariance structure with $(\sigma^2,\rho) = (4, 0.8)$. The population eigenvalue plots for all the studies are shown in Figure \ref{fig:sim}.

We also generated test datasets for each study with the same settings as the training datasets. Then we applied our GSP-based methods and the existing SP-based method to estimate the population spikes, the angles between the sample and population eigenvectors, the correlations between the sample and population PC scores and the asymptotic shrinkage factors. For all of the studies, we used the upper bound $m_{max}=5$ to estimate the number of distant spikes using the algorithm described in \ref{sub:estSpike}. We simulated each study 200 times to calculate the empirical biases and standard errors of the estimates. The results are presented in Table \ref{tab:sim}.

It is clear from Table \ref{tab:sim} that for Study 1, 2, 3 and 5 our methods reduced the bias in all the estimates while having similar standard errors as the existing method. The positive empirical biases in all the SP estimates suggest that the SP method tends to overestimate all the quantities. In Study 4, since the underlying population satisfied the SP assumption, all methods provided very similar and almost unbiased estimates ($<1\%$). The results also verify that the $\lambda$-estimates and $d$-estimates are asymptotically equivalent. The performances of the $\lambda$-estimates and the $d$-estimates are nearly identical in all the simulation studies.

In Study 1, the ratio of the largest non-spike with the two spikes are 0.29 and 0.48, which are substantially larger than zero. Thus according to the discussion in Section \ref{sec:angle} the SP model does not closely approximate the GSP model. The results support this assertion as the SP model-based estimates are highly biased whereas the estimates based on our methods have very little empirical bias. On the other hand, in Study 2 the largest non-spike is very small compared to the smallest spike (ratio 0.08). Thus the estimates based on the SP model closely approximate the estimates based on the GSP model, and we find very little empirical bias ($<1\%$) in all of the SP model-based estimates. In Study 3, even though there were two spikes present, only the largest population eigenvalue was a distant spike. So we presented only the estimates corresponding to the largest population eigenvalue. Since the ratio of the largest non-spike and the largest spike is substantially larger than zero (0.53) in this study, we observe very high empirical bias in the SP model-based estimates. However, our methods provided negligible empirical biases even in the presence of a close spike. We also presented the estimated number of distant spikes in each of the simulation studies in \ref{suppA}. Note that in some cases our algorithm over-estimates the number of distant spikes. However, as the over-estimation is finite, the estimates of the distant spikes still remain consistent.

Study 5 shows the performances of our methods when $p>>n$. In this setting,  $p/n=100$ is comparable to $n=500$. Therefore this asymptotic setting falls into the category of ultra-high dimensional data where $p/n\rightarrow \infty$. 
Although our asymptotic results were derived based on the assumption that $p/n\rightarrow \gamma<\infty$, the results from study 5 show that even for this ultra-high dimensional setting our methods provided almost unbiased estimates.

\subsection{Application on Hapmap III data}

For this demonstration we used genetic data from the Hapmap Phase III project (\url{http://hapmap.ncbi.nlm.nih.gov/}). Our sample consisted of unrelated individuals sampled from two different populations: a) Utah residents with Northern and Western European ancestry (CEU) and b) Toscans in Italy (TSI). We only included genomic markers that are on chromosome 1-22, have less than $5\%$ missing values, and those with minor allele frequency more than 0.05. We also excluded 2 samples (both from CEU) with outlier PC scores (more than six standard deviations away from the mean PC score corresponding to at least one distant spike). We then mean-centered and variance-standardized the data for each marker. The final sample consisted of 198 individuals (110 from CEU and 88 from TSI). Total number of markers selected across chromosome 1-22 was 1389511.

To evaluate the performance of the proposed methods with different $p$, we  performed PCA on each chromosome separately. The number of markers varied from 19331 (chromosome 21) to 116582 (chromosome 2). The distribution of the number of markers across different chromosomes are presented in \ref{suppA}.
We first estimated the number of distant spikes using the algorithm described in Section \ref{sub:estSpike}. We found no distant spike in chromosome $22$ and only one distant spike in chromosome $2$. Then we applied our GSP-based methods and the existing SP-based method \citep{lee1} to estimate the asymptotic shrinkage factors corresponding to the distant spikes. Figure \ref{fig:PC_hapmap1}, \ref{fig:PC_hapmap2} compares the estimated asymptotic shrinkage factors for the first two PCs across different chromosomes.
The plots show that for all the chromosomes, $\lambda$-GSP and $d$-GSP methods provided almost equal estimates while the SP estimates are larger than both the GSP estimates. This suggests that the SP method would over-estimate the shrinkage factors when the population eigenvalues deviate from the assumption that the non-spiked eigenvalues are the same.

To investigate whether the proposed shrinkage-bias adjustment can improve the prediction accuracy, we performed a leave-one-out cross-validation. 
In each iteration we removed one individual (test sample) and performed PCA on the remaining individuals (training samples) to predict the PC score of the test sample.
For each predicted PC score, we adjusted the shrinkage-bias using the GSP-based and SP-based shrinkage factor estimates.
One important issue with this cross-validation is that the exclusion of one individual can substantially change the PC-coordinates, in which 
the PC score plots from the training sample-based and complete sample-based PCA can be substantially different. 
In order to circumvent this problem, in each iteration we first rescaled the PC scores based on their corresponding sample eigenvalues to make the PCs comparable. 
In addition, we obtained the mean squared difference of the training sample PC1-2 scores with and without the exclusion of the test sample (for chromosome 2, only PC1 is used), and excluded the test sample from the prediction error estimation if the mean squared difference was above a threshold $\epsilon$.
We used four different values $0.5,1,5$ and $10$ for the threshold parameter $\epsilon$, and for each value of $\epsilon$ we calculated the mean squared errors (MSE) of the unadjusted and adjusted (using the GSP and SP-based methods) predicted PC scores of the test samples. The sample sizes of the test samples that were finally included in the prediction error estimation for different values of $\epsilon$ are shown in \ref{suppA}.
Figure \ref{fig:error_ratio} shows the estimated MSEs for $\epsilon=1$. It is clear that both the $\lambda$-GSP and $d$-GSP methods have much smaller MSEs than the SP method. As expected, the unadjusted predicted PC scores have substantially larger MSE than all the proposed adjustments. The plots are very similar for the other values of $\epsilon$, and they can be found in \ref{suppA}.

Figure \ref{fig:bias_hapmap} illustrates the shrinkage-bias adjustment for the PC1 and PC2 scores of an individual based on the markers on chromosome 7. The plot clearly shows that the bias-adjusted PC score based on the SP model is still biased towards zero, whereas the bias-adjusted PC score based on the GSP model is very close to the original sample PC score. We only showed the $d$-GSP adjusted score in the plot as the $d$-GSP and $\lambda$-GSP adjusted scores were almost equal.

\section{Conclusions and discussion}\label{sec:conclusions}
In this paper, we investigated the asymptotic properties of PCA under the Generalized Spiked Population model and derived estimators of the population eigenvalues, the angles between the sample and population eigenvectors, and the correlation coefficients between the sample and population PC scores. We also proposed methods to adjust the shrinkage bias in the predicted PC scores.
Since the proposed methods do not require the equality of the non-spiked eigenvalues, they can be widely used in high-dimensional biomedical data analysis.
We also implemented all our algorithms in the R package \textbf{hdpca}.

We note that \cite{mestre1,mestre2} proposed an asymptotic setting similar to the generalized spiked population model but with a different assumption on the number of spikes in which the number of spikes increases with the dimension. Under this assumption, he provided asymptotic properties of sample eigenvalues and eigenvectors. However, in many biomedical data, the number of spikes is usually finite as the spikes represent the difference between finitely many underlying subpopulations. Therefore we believe that the generalized spiked population model is more appropriate in such cases.

Since our results were derived under the assumption that $p$ and $n$ grow at the same rate, one open question is whether the method can be applied for the ultra high-dimensional data where $p$ is greatly larger than $n$. In our simulation studies, however, we showed that our method can provide almost unbiased estimators when $p/n$ is as large as $100$. We note that, there exist asymptotic regimes which are suitable for the ultra high-dimensional data. For example, the high-dimension low sample size regime, in which $p \rightarrow \infty$ but $n$ is finite, was discussed in \cite{hall,ahn, jung}. \cite{lee2} considered an alternative ultra high-dimensional regime, where $p$ and $n$ grow to infinity with $p / n \rightarrow \infty$. They also showed that methods developed under the assumption that $p/n$ is finite can be applied to the ultra-high dimensional data. In future research, we will use this asymptotic regime to investigate behaviors of our methods in the ultra-high dimensional regime.

In some special cases, even though the features exhibit strong local correlation, one can use the spiked population model based methods after some suitable data manipulation. In genome-wide association studies, SNP pruning \citep{prune} can be used to remove locally correlated SNPs to satisfy the spiked population model. For example, \citet{lee1} reported good performance of the spiked population model-based methods with the SNP-pruned Hapmap III dataset. This approach, however, can lead to a considerable loss of information; the SNP-pruning in Hapmap III data removed nearly $90\%$ of the SNPs. Since the proposed approach does not require this additional step, it can use most of the information present in the data.

\appendix

\section{Proofs}\label{app_proof}

\begin{proof}[\textbf{Proof of Theorem \ref{thm:spike}}]
The first part of the proof follows directly from Result \ref{thm:bai} along with the fact that on the domain of the distant spikes, the $\psi$ function is strictly increasing, and hence is left invertible. Since $\psi ''(\alpha)>0$ for any $\alpha>\sup{\Gamma_H}$, $\psi'(\alpha)$ is a strictly increasing function for $\alpha>\sup{\Gamma_H}$. Let $S_\psi >\sup{\Gamma_H}$ be a solution for $\psi'(\alpha)=0$. Then for any  $\alpha>\sup{\Gamma_H}$, $\psi'(\alpha)>0$ if and only if $\alpha>S_\psi$. 
Therefore the interval $(S_\psi,\infty)$ is the domain of the distance spikes, and $\psi$ is a strictly increasing function on this interval. The second part follows from Lemma \ref{lemma:bridge}.
\end{proof}

\begin{proof}[\textbf{Proof of Theorem \ref{thm:angle}}]
The proof closely follows the proof of Theorem 2 in \cite{mestre1}. However, contrary to \cite{mestre1}, we do not assume that the population LSD contains the generalized spikes. Thus, some of the derivation steps and results are substantially different from \cite{mestre1}. We start the derivation by first noting that the quadratic forms $\hat{\eta}_k$ can be expressed as contour integrals of a special class of Stieltjes transforms of the sample covariance matrix. Let us define,
$$\hat{m}_p(z) := s_1^T\left (S_p- zI_p\right )^{-1} s_2=\sum_{j=1}^{p}{\frac{s_1^Te_je_j^Ts_2}{d_j-z}};\quad \forall z\in \mathbb{C^+}$$
where $s_1$ and $s_2$ are non-random vectors with uniformly bounded norms. \cite{girko} and \cite{mestre3} showed that under the assumption that the population LSD contains the generalized spikes, 
\begin{equation}\label{eqn:girko}
\left |\hat{m}_p(z)-m_p(z) \right |\xrightarrow{a.s.}0;\quad \forall z\in\mathbb{C^+}
\end{equation}
where
$$m_p(z)=s_1^T\left [w(z)\Sigma_p-zI_p\right ]^{-1}s_2=\sum_{j=1}^{p}{\frac{s_1^TE_jE_j^Ts_2}{w(z)\lambda_j-z}}. $$
The function $w(z)$ is defined as $w(z)=1-\gamma-\gamma z b_F(z)$ where
$b_F(z)=\int{(\tau-z)^{-1}dF(\tau)}$ is the Stieltjes transform of the sample LSD. It is easy to check, by the same arguments provided in \cite{mestre3}, that the result still holds when the generalized spikes are considered lying outside the support of the population LSD.
The functions $\hat{m}_p,m_p$ and $b_F$ can be extended to $\mathbb{C^-}=\{z\in\mathbb{C}:Im(z)<0\}$ by defining $\hat{m}_p(z)=\hat{m}_p^*(z^*),m_p(z)=m_p^*(z^*)$ and $b_F(z)=b_F^*(z^*)$ for $z\in \mathbb{C^-}$ where $z^*$ is the complex conjugate of $z$. With this definition, $\left |\hat{m}_p(z)-m_p(z) \right |\xrightarrow{a.s.}0 $ even when $z\in \mathbb{C^-}$. 
Now $\hat{\eta}_k$ can be expressed as an integral of $\hat{m}_p$,
\begin{equation*}
\hat{\eta}_k=\frac{1}{2\pi i}\oint_{\partial\hat{\mathbb{R}}_y^{-}(k)}{\hat{m}_p(z)dz},
\end{equation*}
where $i=\sqrt{-1},y>0$ and $\partial\hat{\mathbb{R}}_y^{-}(k)$ is the negatively (clockwise) oriented boundary of the rectangle $\hat{\mathbb{R}}_y(k)=\left \{z\in \mathbb{C}:\hat{a}_1\leq Re(z)\leq \hat{a}_2, |Im(z)|\leq y \right\}$. $\hat{a}_1$ and $\hat{a}_2$ can be arbitrarily chosen provided that $\hat{\mathbb{R}}_y(k)$ contains only the sample eigenvalue $d_k$ and no other sample eigenvalue. Then the following lemma gives the almost sure limit of $\hat{\eta}_k$.
\begin{lemma}\label{lemma:dct}
\begin{equation*}
\left | \frac{1}{2\pi i}\oint_{\partial\hat{\mathbb{R}}_y^{-}(k)}{\hat{m}_p(z)dz}- \frac{1}{2\pi i}\oint_{\partial\mathbb{R}_y^{-}(k)}{m_p(z)dz} \right |\xrightarrow{a.s.}0,
\end{equation*}
where $y>0$ and $\partial\mathbb{R}_y^{-}(k)$ is the negatively (clockwise) oriented boundary of the rectangle $\mathbb{R}_y(k)=\left \{z\in \mathbb{C}:a_1\leq Re(z)\leq a_2, |Im(z)|\leq y \right\}$. $a_1$ and $a_2$ can be arbitrarily chosen so that $\psi(\lambda_k)\in[a_1,a_2]$ and $[a_1,a_2]\subset \psi\left (S_\psi,\infty\right )$ where $S_\psi>\sup{\Gamma_H},\psi ' (S_\psi)=0$. $\psi\left (S_\psi,\infty\right )$ denotes the image of the interval $(S_\psi,\infty)$ under $\psi$.
\end{lemma}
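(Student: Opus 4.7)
The plan is to reduce the two contour integrals to integrals over a single fixed contour and then apply a dominated/Vitali convergence argument using the pointwise almost sure convergence in \eqref{eqn:girko}. The target object has two sources of randomness: the contour $\partial\hat{\mathbb{R}}_y^-(k)$ itself (through $\hat a_1,\hat a_2$ which are chosen to enclose $d_k$) and the integrand $\hat m_p$; I would deal with the contour first and the integrand second.

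First I would show that the two rectangles can be made to coincide for all large $p$ almost surely. By Result~\ref{thm:bai} and the multiplicity-one assumption, $d_k\xrightarrow{a.s.}\psi(\lambda_k)$, which lies in the interior of the target interval $[a_1,a_2]$. Applying Result~\ref{thm:bai} to the other distant spikes (using multiplicity one to separate their $\psi$-images from $\psi(\lambda_k)$) and the no-eigenvalues-outside-the-support result of \cite{bai2} for close spikes and non-spikes (whose sample eigenvalues cluster inside $\Gamma_F$, disjoint from $\psi((S_\psi,\infty))$) shows that no other $d_j$ accumulates near $\psi(\lambda_k)$. Choosing $a_2-a_1$ small enough so that $\psi(\lambda_j)\notin[a_1,a_2]$ for the other distant spikes, almost surely for $p$ large only $d_k$ lies in $[a_1,a_2]$, so one may simply take $\hat a_1=a_1,\hat a_2=a_2$. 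The difference then collapses to $\frac{1}{2\pi i}\oint_{\partial\mathbb{R}_y^-(k)}[\hat m_p(z)-m_p(z)]\,dz$ over a single fixed contour.

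Next I would lift the pointwise a.s.\ convergence \eqref{eqn:girko} to uniform convergence on the compact contour $\partial\mathbb{R}_y(k)$. The poles of $\hat m_p$ are the $d_j$, and the poles of $m_p$ are the solutions of $w(z)\lambda_j=z$, which for distant spikes reduce to $\psi(\lambda_j)$ and for the remaining eigenvalues lie in $\Gamma_F$. The hypothesis $[a_1,a_2]\subset\psi((S_\psi,\infty))$, combined with the smallness of $a_2-a_1$ from the previous step, keeps the contour strictly bounded away from all of these poles; taking $y>0$ keeps the two horizontal edges away from the real axis. This yields a deterministic uniform bound on $|\hat m_p(z)|$ and $|m_p(z)|$ along $\partial\mathbb{R}_y(k)$ valid for all large $p$ a.s. Vitali's convergence theorem applied to the analytic family $\hat m_p-m_p$ then promotes pointwise a.s.\ convergence to uniform a.s.\ convergence on the contour.

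The conclusion is immediate: contour length times the uniform sup gives
\[
\left|\tfrac{1}{2\pi i}\oint_{\partial\mathbb{R}_y^-(k)}[\hat m_p(z)-m_p(z)]\,dz\right|
\le \tfrac{L}{2\pi}\sup_{z\in\partial\mathbb{R}_y(k)}|\hat m_p(z)-m_p(z)|\xrightarrow{a.s.}0.
\]
The main obstacle is the uniform-boundedness input to the Vitali step: one must verify that the moving pole structure of $m_p$ (through the $z$-dependence of $w(z)$) never drifts onto the contour, and that the resolvent $(S_p-zI_p)^{-1}$ remains operator-norm bounded uniformly in $p$ despite the randomness of $d_k$ inside $[a_1,a_2]$. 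This is precisely where the hypothesis $[a_1,a_2]\subset\psi((S_\psi,\infty))$ does the work, playing the role that the ``good'' $\psi$-intervals play in the spiked-population arguments of \cite{mestre1}, while adapting them to the GSP setting in which the population LSD does not contain the generalized spikes.
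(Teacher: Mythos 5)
Your proposal is correct and follows essentially the same route as the paper: reduce to a single fixed contour using $d_k\xrightarrow{a.s.}\psi(\lambda_k)$ together with the separation of $[a_1,a_2]$ from $\Gamma_F$, then pass the pointwise a.s.\ convergence of $\hat m_p-m_p$ from \eqref{eqn:girko} through the contour integral via a uniform bound. The only cosmetic differences are that the paper keeps the two rectangles distinct with $\hat a_i\rightarrow a_i$ and bounds the resulting contour-mismatch term explicitly via the resolvent bound $|\hat m_p(z)|\leq \|s_1\|\|s_2\|/d(z,\Gamma_{F_p})$, and it invokes dominated convergence rather than Vitali's theorem for the final limit interchange.
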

Lemma \ref{lemma:dct} implies
\begin{equation}\label{eqn:eta2}
\left | \hat{\eta_k}- \sum_{j=1}^{p}{\left (\frac{1}{2\pi i} \oint_{\partial\mathbb{R}_y^{-}(k)}{\frac{dz}{w(z)\lambda_j-z}}	\right )s_1^TE_jE_j^Ts_2} \right |\xrightarrow{a.s.}0.
\end{equation}
Now we need to evaluate the integral in \eqref{eqn:eta2} in order to get the almost sure limit of the random variable $\hat{\eta}_k$. First, we extend the $\psi$ function to $\mathbb{R}_y(k)$ as follows,
$$\psi(z):=z\left ( 1+\gamma\int{\frac{\lambda dH(\lambda)}{z-\lambda}}\right ), \quad \forall z \in \mathbb{R}_y(k).$$
According to \cite{mp}, for all $z\in\mathbb{C^+}$, $b_F(z)=b$ is the unique solution to the following equation
\begin{equation}\label{eqn:mpangle}
b=\int{\frac{dH(\lambda)}{\lambda(1-\gamma-\gamma z b)-z}}
\end{equation} in the set $\left \{b\in \mathbb{C}:\gamma b-(1-\gamma)/z\in \mathbb{C^+} \right \}$.
It is easy to see that $b_F$  also satisfies \eqref{eqn:mpangle} when $z \in \mathbb{C^-}$. Now we formally define the $f_F$ function introduced in \eqref{eqn:psif},
\begin{equation}\label{eqn:definef}
f_F(z):=\frac{z}{w(z)}=\frac{z}{1-\gamma-\gamma z b_F(z)}, \quad \forall z\in \mathbb{C\setminus R}.
\end{equation} 
Then $b_F$ can be expressed in terms of $f_F$ as, $$b_F(z)=\frac{(1-\gamma)f_F(z)-z}{\gamma z f_F(z)}.$$
By replacing $b$ with $\left [(1-\gamma)f-z \right ]/\gamma z f$ in \eqref{eqn:mpangle},
\begin{equation}\label{eqn:mp2angle}
f\left (1+\gamma\int{\frac{\lambda dH(\lambda)}{f-\lambda}}\right )=z.
\end{equation}
It is easy to see that $b_F$ is a solution to \eqref{eqn:mpangle} if and only if $f_F$ is a solution to \eqref{eqn:mp2angle}. Therefore, for all $z\in \mathbb{C^+}$ (similarly for $z\in\mathbb{C^-}$), $f_F(z)=f$ is the unique solution to \eqref{eqn:mp2angle} on $\mathbb{C^+}$ (respectively, $\mathbb{C^-}$). This implies $\psi\left (f_F(z)\right )=z$ for all $z\in\mathbb{R}_y(k)\setminus [a_1,a_2]$.

Now we focus on the case when $z\in \mathbb{R\setminus}\{0\}$. According to \cite{silv2}, we can extend $b_F$ to $\mathbb{R\setminus} \{0\}$ by defining $b_F(z)=\lim_{y\rightarrow 0^+}{b_F(z+iy)}$ for any $z\in \mathbb{R\setminus}\{0\}$. The definition of $f_F$ can also be extended in a similar fashion. 
In Lemma \ref{lemma:bridge} we have shown that $f_F$ is the inverse function of $\psi$ on $(S_\psi,\infty)$, and there exists $M_f>\sup{\Gamma_F}$ for which $\psi\left (S_\psi,\infty\right )=(M_f,\infty)$. Thus, $[a_1,a_2]\subset\psi\left (S_\psi,\infty\right )$ implies $\psi\left (f_F(z)\right )=z$ for all $z\in\mathbb{R}_y(k)$. Furthermore, the function $\psi$ is continuous and differentiable on $\mathbb{R}_y(k)$, and the derivative is given by, $$\psi ' (z)=1-\gamma\int{\left(\frac{\lambda}{z-\lambda}\right)^2dH(\lambda)}.$$ Then the integral in \eqref{eqn:eta2} can be expressed in terms of $\psi$ and $f_F$ as follows,
\begin{equation}\label{eqn:integral}
\begin{split}
\frac{1}{2\pi i} \oint_{\partial\mathbb{R}_y^{-}(k)}{\frac{dz}{w(z)\lambda_j-z}}	&=\frac{1}{2\pi i} \oint_{\partial\mathbb{R}_y^{-}(k)}{\frac{dz}{\frac{z}{f_F(z)}\lambda_j-z}}\\
&=\frac{1}{2\pi i} \oint_{\partial\mathbb{R}_y^{-}(k)}{\frac{1}{\lambda_j-f_F(z) }.\frac{f_F(z)}{\psi\left (f_F(z)\right )}dz }.
\end{split}
\end{equation}
The integrand in the final expression is holomorphic on $\mathbb{R}_y^{-}(k)$ when $j\neq k$ and possesses a simple pole $\psi(\lambda_k)$ when $j=k$. Therefore, when $j\neq k$ the integral in \eqref{eqn:integral} is zero. When $j=k$, Applying the residue theorem on the final integral,
\begin{equation*}
\begin{split}
\frac{1}{2\pi i} \oint_{\partial\mathbb{R}_y^{-}(k)}{\frac{dz}{w(z)\lambda_k-z}}	&=\lim_{z\rightarrow \psi(\lambda_k)}{\frac{\psi(\lambda_k)-z}{\lambda_k-f_F(z)}.\frac{f_F(z)}{\psi\left (f_F(z)\right )}}\\
&=\lim_{z\rightarrow \psi(\lambda_k)}{\frac{\psi(\lambda_k)-\psi\left( f_F(z)\right)}{\lambda_k-f_F(z)}.\frac{f_F(z)}{\psi\left (f_F(z)\right )}}\\
&=\frac{\lambda_k \psi '(\lambda_k)}{\psi(\lambda_k)}.
\end{split}
\end{equation*}
This implies,
\begin{equation*}
\frac{1}{2\pi i} \oint_{\partial\mathbb{R}_y^{-}(k)}{\frac{dz}{w(z)\lambda_j-z}}	=\begin{cases}
\frac{\lambda_k\psi'(\lambda_k)}{\psi(\lambda_k)} & j=k\\
0 & j\neq k
\end{cases}
\end{equation*}
and the proof is complete.
\end{proof}

\begin{proof}[\textbf{Proof of Lemma \ref{lemma:dct}}]
First, we show that $\hat{a}_1,\hat{a}_2,a_1,a_2$ can be chosen satisfying $\hat{a}_1\rightarrow a_1$ and $\hat{a}_2\rightarrow a_2$.
This is possible due to the fact that $d_k\xrightarrow{a.s.}\psi(\lambda_k)$ and $\psi(\lambda_k)\subset \psi\left (S_\psi,\infty\right )=(M_f,\infty)$ where $M_f>\sup{\Gamma_F}$. Therefore, we can choose a neighborhood $[a_1,a_2]$ around $\psi(\lambda_k)$ so that $[a_1,a_2]\subset (M_f,\infty)$. Moreover, as $M_f$ is bounded away from the support of the sample LSD $F$ and $d_k\xrightarrow{a.s.}\psi(\lambda_k)$, we can select a neighborhood $[\hat{a}_1,\hat{a}_2]$ around $d_k$ which does not contain any other eigenvalue for which $\hat{a}_1\rightarrow a_1, \hat{a}_2\rightarrow a_2$. Then,
\begin{multline}\label{eqn:dct}
\left | \frac{1}{2\pi i}\oint_{\partial\hat{\mathbb{R}}_y^{-}(k)}{\hat{m}_p(z)dz}- \frac{1}{2\pi i}\oint_{\partial\mathbb{R}_y^{-}(k)}{m_p(z)dz} \right |\\
\leq \frac{1}{2\pi}\left \{\sup_{z\in\partial\hat{\mathbb{R}}_y^{-}(k)\cup \partial\mathbb{R}_y^{+}(k)}{ \left |\hat{m}_p(z)\right |}\right \}\left(|\hat{a}_1-a_1)| +|\hat{a}_2-a_2)| \right )\\
+ \frac{1}{2\pi} \oint_{\partial\mathbb{R}_y^{-}(k)}{\left |\hat{m}_p(z)dz-m_p(z)\right ||dz|}.
\end{multline}
From Cauchy-Schwartz inequality, we can obtain the following upper bound for $\hat{m}_p$;
$$\left |\hat{m}_p(z)\right |\leq \frac{\|s_1 \|\|s_2 \|}{d\left( z,\Gamma_{F_p}\right )},$$ where $d\left( z,\Gamma_{F_p}\right )=\inf_{y\in\Gamma_{F_p}}{|z-y|}$. Since $F_p\rightarrow F$ point-wise and $[a_1,a_2]$ is bounded away from $\Gamma_F$, $d\left( z,\Gamma_{F_p}\right )$ is bounded away from zero with probability one for large enough $p$ and $n$. Therefore $\left |\hat{m}_p(z)\right |$ is finite for $z\in\mathbb{R}_y(k)$ with probability one for large enough $p$ and $n$. Moreover, since $[\hat{a}_1,\hat{a}_2]\rightarrow [a_1,a_2]$, the interval $[\hat{a}_1,\hat{a}_2]$ will eventually be bounded away from $\Gamma_F$. Thus, eventually the upper bound for $\left |\hat{m}_p(z)\right |$ will also be finite for $z\in\hat{\mathbb{R}}_y(k)$. Therefore, the first term on the right hand side of \eqref{eqn:dct} will go to zero as $\hat{a}_1 \rightarrow a_1, \hat{a}_2\rightarrow a_2$.

Now, as $\hat{m}_p(z)$ and $m_p(z)$ are holomorphic functions on the compact set $\partial\mathbb{R}_y^{-}(k)$,
$$ \sup_{z\in\partial\mathbb{R}_y^{-}(k)}{\left |\hat{m}_p(z)-m_p(z)\right |}<\infty.$$
Also from \eqref{eqn:girko}, $\left |\hat{m}_p(z)-m_p(z) \right |\xrightarrow{a.s.}0$ point-wise for all $z\in \mathbb{C\setminus R}$. Therefore, by dominated convergence theorem the second term on the right hand side of \eqref{eqn:dct} also converges to zero almost surely.
\end{proof}

We can show the asymptotic equivalence of the limits derived in Theorem \ref{thm:angle} and Result \ref{thm:angle1} as a direct application of the following lemma.
\begin{lemma}\label{lemma:bridge}
Suppose assumptions (A)--(C) hold. If $\lambda_k$ is a distant spike with multiplicity one, and $d_k$ is the corresponding sample eigenvalue, then
$$f_F(d_k)\xrightarrow{p}\lambda_k; \quad \frac{d_kg_F(d_k)}{f_F(d_k)}\xrightarrow{p}\psi'(\lambda_k).$$
\end{lemma}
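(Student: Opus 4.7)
The plan is to deduce both convergences from Result~\ref{thm:bai} (which gives $d_k \xrightarrow{a.s.} \psi(\lambda_k)$) via the continuous mapping theorem, once I establish that $f_F$ is the analytic inverse of $\psi$ on the distant-spike domain $(S_\psi, \infty)$, with image an open interval bounded away from the support of $F$. The convergence in probability (rather than almost surely) comes from the fact that $f_F$ is itself an unknown population object that we must treat as deterministic once Result~\ref{thm:bai} is applied.

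First I would set up the inverse relation. Starting from the Marchenko--Pastur equation for the sample LSD (written in Section~\ref{sec:karoui} in terms of $v_F$) and the identity $f_F(z) = z/w(z)$ with $w(z) = 1 - \gamma - \gamma z b_F(z)$ and $b_F = -v_F$ (up to notational conventions), I would substitute $b_F = ((1-\gamma)f_F(z) - z)/(\gamma z f_F(z))$ into the Marchenko--Pastur relation. The algebra collapses to $z = f_F(z)\bigl(1 + \gamma \int \lambda\, dH(\lambda)/(f_F(z) - \lambda)\bigr) = \psi(f_F(z))$, first on $\mathbb{C}^{+}$ and then by boundary limits on the relevant real interval. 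Combined with strict monotonicity of $\psi$ on $(S_\psi, \infty)$ (which holds by the definition of $S_\psi$ as the unique zero of $\psi'$ and the strict convexity argument already used in the proof of Theorem~\ref{thm:spike}), this identifies $f_F$ as the continuous inverse of $\psi|_{(S_\psi,\infty)}$, with image some $(M_f, \infty)$ where $M_f = \psi(S_\psi) > \sup \Gamma_F$.

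For the first convergence, since $\lambda_k$ is a distant spike, $\lambda_k \in (S_\psi, \infty)$ and $\psi(\lambda_k) \in (M_f, \infty)$; $f_F$ is continuous there, so the continuous mapping theorem applied to Result~\ref{thm:bai} gives $f_F(d_k) \xrightarrow{p} f_F(\psi(\lambda_k)) = \lambda_k$.

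For the second convergence, I would differentiate $f_F$ directly. Writing $f_F(x) = x/u(x)$ with $u(x) = 1 + \gamma \int \tau\, dF(\tau)/(x-\tau)$, an elementary computation gives
\begin{equation*}
f_F'(x) = \frac{f_F(x)}{x\, g_F(x)},
\end{equation*}
so that $d_k g_F(d_k)/f_F(d_k) = 1/f_F'(d_k)$. Since $f_F = \psi^{-1}$ on $(M_f, \infty)$, the inverse function theorem gives $f_F'(x) = 1/\psi'(f_F(x))$, whence $d_k g_F(d_k)/f_F(d_k) = \psi'(f_F(d_k))$. Continuity of $\psi'$ on $(S_\psi, \infty)$ combined with the first part then yields $\psi'(f_F(d_k)) \xrightarrow{p} \psi'(\lambda_k)$. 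The main obstacle is the first step: rigorously extending the Marchenko--Pastur identity $\psi \circ f_F = \mathrm{id}$ from $\mathbb{C}^{+}$ to the real interval $(M_f, \infty)$ and verifying the strict separation $M_f > \sup \Gamma_F$ so that $f_F$ is smooth (not merely measurable) at the deterministic limit point $\psi(\lambda_k)$; everything after that is bookkeeping.
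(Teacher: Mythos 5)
Your proposal is correct and follows essentially the same route as the paper: establish $\psi\circ f_F=\mathrm{id}$ on $\mathbb{C}^{+}$ from the Mar\v{c}enko--Pastur equation, extend it by boundary limits to a real interval $(M_f,\infty)$ with $M_f>\sup\Gamma_F$ so that $f_F=\psi^{-1}$ on $(S_\psi,\infty)$, then apply Result~\ref{thm:bai} with continuity for the first claim and the identity $f_F'(x)=f_F(x)/\bigl(x\,g_F(x)\bigr)$ together with the inverse function theorem for the second (your version of this derivative formula is in fact the correct one --- the paper's displayed $f_F'$ omits a factor of $f_F$ inside the bracket, though its subsequent computation uses the correct expression). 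The one step you flag as the ``main obstacle,'' namely verifying that the real boundary limit of $f_F$ actually lands in $(S_\psi,\infty)$, is precisely where the paper invests its effort (via the sign of the imaginary part of the Mar\v{c}enko--Pastur equation, yielding $\psi'(f_F(x))>0$ for $x>\sup\Gamma_F$), so you have correctly identified, though not fully executed, the only nontrivial analytic point.
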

\begin{proof}
	We have already established in the proof of Theorem \ref{thm:angle} that for all $z\in \mathbb{C^+}$ (similarly for $z\in \mathbb{C^-}$), $f_F(z)=f$ is the unique solution to \eqref{eqn:mp2angle} on $\mathbb{C^+}$ (respectively, $\mathbb{C^-}$). When $z$ is restricted to $\mathbb{C\setminus R}$, using \eqref{eqn:definef} and the fact that $b_F(z)=\int{(\tau-z)^{-1}dF(\tau)}$ we can write,
	$$f_F(z)=\frac{z}{1+\gamma\int{\frac{\tau dF(\tau)}{z-\tau}}}. $$
	Now suppose $z=x\in \mathbb{R}\setminus \{0\}$. Then both equations \eqref{eqn:mpangle} and \eqref{eqn:mp2angle} will have multiple roots (both real and complex valued depending on $x$ and $H$). If we look at \eqref{eqn:mp2angle} closely, we can see for real valued $x$ it can be represented as $\psi(f(x))=x$, where the $\psi$ function is as defined in \eqref{eqn:psif}. As we have seen in the proof of Theorem \ref{thm:spike}, $\psi$ is strictly increasing in the interval $(S_\psi,\infty)$ where $S_\psi>\sup{\Gamma_H}$ and $\psi ' (S_\psi)=0$. Therefore, any real-valued solution $f$ of $\psi(f(x))=x$ in $(S_\psi,\infty)$ has to be the inverse of $\psi$, which is unique due to the strict monotonicity of $\psi$ on $(S_\psi,\infty)$.
	Now suppose $\Gamma_F$ is the support of the sample LSD $F$. We will show that there exists $M_f>\sup{\Gamma_F}$ such that for any $x>M_f$, the function $f_F$ is real-valued and it is a solution to \eqref{eqn:mp2angle} in the interval $(S_\psi,\infty)$. Thus it is also the unique such solution and the inverse of the $\psi$ function in $(S_\psi,\infty)$.
	
	Let $x\in \mathbb{R},x>\sup{\Gamma_F}$ and $z=x+iy\in\mathbb{C^+}$. Now, as $z\in \mathbb{C^+}$, $f_F(z)$ is the unique solution to \eqref{eqn:mp2angle} in $\mathbb{C^+}$. Therefore, if we express $f_F(z)$ as $u(z)+iv(z)$, then $v(z)>0$. Also, the imaginary part of \eqref{eqn:mp2angle} can be written as
	$$v(z)\left [1-\gamma\int{\frac{\lambda^2}{\{u(z)-\lambda\}^2+v(z)^2}} \right ]=y.$$
	Both $v(z)$ and $y$ being positive implies that
	\begin{equation}\label{eqn:1stdegree}
	1-\gamma\int{\frac{\lambda^2}{\{u(z)-\lambda\}^2+v(z)^2}}>0.
	\end{equation}
	Due to the continuity of $f_F$ on the set $\{z\in \mathbb{C^+}: z=x+iy, x>\sup{\Gamma_F}\}$, $$f_F(x)=\lim_{y\rightarrow 0^+}{\frac{x+iy}{1+\gamma\int{\frac{\tau dF(\tau)}{x+iy-\tau}}}}=\frac{x}{1+\gamma\int{\frac{\tau dF(\tau)}{x-\tau}}},$$
	which is real-valued. Thus $u(z)\rightarrow f_F(x)$ and $v(z)\rightarrow 0$ as $y\rightarrow 0^+$. Therefore as $y\rightarrow 0^+$, the inequality \eqref{eqn:1stdegree} becomes
	$$1-\gamma\int{\frac{\lambda^2}{\{f_F(x)-\lambda\}^2}}>0,$$ which implies $\psi ' \left (f_F(x) \right )>0$.
	
	We can see that $f_F(x)$ attains zero at $\sup{\Gamma_F}$ and it is strictly and unboundedly increasing for $x>\sup{\Gamma_F}$. This ensures the existence of a threshold $M_F>\sup{\Gamma_F}$ such that the function $f_F$ maps the interval $(M_F,\infty)$ to $(S_\psi,\infty)$. Therefore, $f_F$ and $\psi$ are both strictly increasing, continuous and bijective mappings between the intervals $(M_F,\infty)$ and $(S_\psi,\infty)$.
	Since $f_F(z)$ is the unique solution to \eqref{eqn:mp2angle} in $\mathbb{C^+}$ when $z\in \mathbb{C^+}$, $f_F$ is also a solution to \eqref{eqn:mp2angle} in $(S_\psi,\infty)$ when $x>M_F$ due to the continuity of the left hand side of \eqref{eqn:mp2angle} on the set $\{f\in \mathbb{C^+}: f=u+iv, u>S_\psi\}$, which further implies that $f_F$ is the inverse function of $\psi$ on $(S_\psi,\infty)$.

	The first part of this lemma is proved as a corollary to Result \ref{thm:bai} as $\psi^{-1}=f_F$ on the domain of distant spikes, i.e. $(S_\psi,\infty)$. For the second part we first need to derive the expression of $f_F'$, and then derive the expression of $\psi '$ in terms of $f_F$ and $F$.
	\begin{equation*}
		f_F'(x)=\frac{f(x)}{x}\left [1+\gamma v_F(x) \right ]; \quad v_F(x)=\int{\frac{\tau dF(\tau)}{\left (x-\tau\right )^2}}.
	\end{equation*}
	For a distant spike $\lambda_k$, using the expression of $f_F'$ we get,
	\begin{equation*}
	\frac{\lambda_k\psi'(\lambda_k)}{\psi(\lambda_k)}=\frac{\lambda_k}{\psi(\lambda_k)f_F'\left (\psi(\lambda_k) \right )}=\frac{1}{1+\gamma f_F\left (\psi(\lambda_k) \right ) \int{\frac{\tau dF(\tau)}{\left [\psi(\lambda_k)-\tau\right ]^2}} }=g_F\left (\psi(\lambda_k)\right ).
	\end{equation*}
	As $\psi(\lambda_k)>M_f$, $g_F$ is continuous at $\psi(\lambda)$. Since $d_k\xrightarrow{p}\psi(\lambda_k)$, $$g_F(d_k)\xrightarrow{p}g_F\left (\psi(\lambda_k)\right )=\frac{\lambda_k\psi'(\lambda_k)}{\psi(\lambda_k)}; \quad \frac{d_kg_F(d_k)}{f_F(d_k)}\xrightarrow{p}\psi'(\lambda_k).$$
\end{proof}

\begin{proof}[\textbf{Proof of Theorem \ref{thm:score}}]
\begin{equation*}
\begin{split}
\left\langle P_k,p_k \right\rangle ^2 &=\frac{1}{n^2\lambda_kd_k}\left\langle XE_k,Xe_k \right\rangle ^2
=\frac{1}{\lambda_kd_k}\left (E_k^T\frac{X^TX}{n}e_k \right )^2\\
&=\frac{1}{\lambda_kd_k}\left [E_k^T\left (\sum_{i=1}^{p}{d_ie_ie_i^T}\right )e_k \right ]^2
=\frac{d_k}{\lambda_k}\left\langle e_k,E_k \right\rangle ^2.
\end{split}
\end{equation*}
Using the limits derived in Theorem \ref{thm:angle} and Result \ref{thm:bai},
	\begin{equation*}
	\left|\frac{d_k}{\lambda_k}\left\langle e_k,E_k \right\rangle ^2 -\psi'(\lambda_k)\right|\xrightarrow{p} 0.
	\end{equation*}
Using Lemma \ref{lemma:bridge},
	\begin{equation*}
	\left|\frac{d_k}{\lambda_k}\left\langle e_k,E_k \right\rangle ^2-\frac{d_kg_F(d_k)}{f_F(d_k)} \right|\xrightarrow{p} 0.
	\end{equation*}	
\end{proof}

\begin{proof}[\textbf{Proof of Theorem \ref{thm:shrinkage}}]
We show that the denominator $E\left (p_{kj}^2 \right )$ converges to $\psi(\lambda_k)$ and the numerator $E\left (q_k^2 \right )$ converges to $\lambda_k^2/\psi(\lambda_k)$. The proof will be complete using the fact that $d_k\xrightarrow{p}\psi(\lambda_k)$.

The denominator,
\begin{equation*}
\begin{split}
E\left (p_{kj}^2 \right )&=\frac{1}{n}E\left (\sum_{i=1}^{n}{p_{ki}^2} \right )
=\frac{1}{n}E\left (\sum_{i=1}^{n}{(x_i^Te_k)^2} \right )\\
&=E\left (e_k^T\frac{X^TX}{n}e_k\right )=E\left [e_k^T\left (\sum_{i=1}^{p}{d_ie_ie_i^T} \right )e_k\right ]=E(d_k)\rightarrow\psi(\lambda_k).
\end{split}
\end{equation*}

The numerator,
\begin{equation*}
\begin{split}
E\left (q_k^2 \right )&=E\left [(x_{new}^Te_k)^2 \right ]
=E\left [E(x_{new}^Te_k)^2|e_k \right ]\\
&=E\left [Var(x_{new}^Te_k)|e_k \right ]
=E\left [e_k^T\Sigma_pe_k\right ].
\end{split}
\end{equation*}

Now, using the notations in the proof of Theorem \ref{thm:angle} and Lemma \ref{lemma:bridge}, we have $b_F(z)=\int{(\tau-z)^{-1}dF(\tau)}$ as the Stieltjes transform of the sample LSD and the function $f_F$ defined as $f_F(z)=z\left [1-\gamma-\gamma z b_F(z) \right ]^{-1}$. Therefore,
\begin{equation*}
b_F(z)=\frac{(1-\gamma)f_F(z)-z}{\gamma z f_F(z)}.
\end{equation*}
The functions $b_F$ and $f_F$ can be extended to the real axis by defining the extensions as shown in the proof of Lemma \ref{lemma:bridge}. Thus, for the sample eigenvalue $d_k$ corresponding to the distant spike $\lambda_k$ we have
\begin{equation*}
b_F(d_k)=\frac{(1-\gamma)f_F(d_k)-d_k}{\gamma d_k f_F(d_k)}.
\end{equation*}
According to Theorem 4 in \cite{ledoit}, the limit of $e_k^T\Sigma_pe_k$ is given by $d_k\left [1-\gamma-\gamma d_k b_F(d_k) \right ]^{-2}$. Replacing the expression of $b_F(d_k)$ in this limit, we get
\begin{equation*}
\left |e_k^T\Sigma_p e_k - \frac{f_F^2(d_k)}{d_k}\right | \xrightarrow{p} 0.
\end{equation*}
Using Result \ref{thm:bai} and Lemma \ref{lemma:bridge} we have $f_F^2(d_k)/d_k\xrightarrow{p}\lambda_k^2/\psi(\lambda_k)$. Therefore, the limit of the numerator is given by,
$$E(q_k^2)=E\left [e_k^T\Sigma_pe_k\right ]\rightarrow\frac{\lambda_k^2}{\psi(\lambda_k)}.$$
\end{proof}

\begin{supplement}
\sname{Supplement A}\label{suppA}
\stitle{Supplementary tables and figures for ``Asymptotic properties of Principal Component Analysis and shrinkage-bias adjustment under the Generalized Spiked Population model"}
\slink[url]{SuppA.pdf}
\sdescription{Some additional tables and figures relevant to this paper are provided in this supplementary material.}
\end{supplement}

\nocite{*}
\bibliographystyle{imsart-nameyear}
\bibliography{ref1}

\begin{thebibliography}{32}

\bibitem[\protect\citeauthoryear{Ahn et~al.}{2007}]{ahn}
\begin{barticle}[author]
\bauthor{\bsnm{Ahn},~\bfnm{Jeongyoun}\binits{J.}},
  \bauthor{\bsnm{Marron},~\bfnm{J.~S.}\binits{J.~S.}},
  \bauthor{\bsnm{Muller},~\bfnm{Keith~M.}\binits{K.~M.}} \AND
  \bauthor{\bsnm{Chi},~\bfnm{Yueh-Yun}\binits{Y.-Y.}}
(\byear{2007}).
\btitle{The High-Dimension, Low-Sample-Size Geometric Representation Holds
  under Mild Conditions}.
\bjournal{Biometrika}
\bvolume{94}
\bpages{760-766}.
\end{barticle}
\endbibitem

\bibitem[\protect\citeauthoryear{Anderson}{1963}]{and1}
\begin{barticle}[author]
\bauthor{\bsnm{Anderson},~\bfnm{T.~W.}\binits{T.~W.}}
(\byear{1963}).
\btitle{Asymptotic Theory for Principal Component Analysis}.
\bjournal{Ann. Math. Statist.}
\bvolume{34}
\bpages{122--148}.
\bdoi{10.1214/aoms/1177704248}
\end{barticle}
\endbibitem

\bibitem[\protect\citeauthoryear{Anderson}{2003}]{and2}
\begin{bbook}[author]
\bauthor{\bsnm{Anderson},~\bfnm{T.~W.}\binits{T.~W.}}
(\byear{2003}).
\btitle{An Introduction to Multivariate Statistical Analysis},
\bedition{$3^{rd}$} ed.
\bpublisher{Wiley Series in Probability and Statistics}, \baddress{Hoboken,
  NJ}.
\end{bbook}
\endbibitem

\bibitem[\protect\citeauthoryear{Anderson et~al.}{2010}]{prune}
\begin{barticle}[author]
\bauthor{\bsnm{Anderson},~\bfnm{Carl~A}\binits{C.~A.}},
  \bauthor{\bsnm{Pettersson},~\bfnm{Fredrik~H}\binits{F.~H.}},
  \bauthor{\bsnm{Clarke},~\bfnm{Geraldine~M}\binits{G.~M.}},
  \bauthor{\bsnm{Cardon},~\bfnm{Lon~R}\binits{L.~R.}},
  \bauthor{\bsnm{Morris},~\bfnm{Andrew~P}\binits{A.~P.}} \AND
  \bauthor{\bsnm{Zondervan},~\bfnm{Krina~T}\binits{K.~T.}}
(\byear{2010}).
\btitle{Data quality control in genetic case-control association studies}.
\bjournal{Nature protocols}
\bvolume{5}
\bpages{1564-1573}.
\end{barticle}
\endbibitem

\bibitem[\protect\citeauthoryear{Bai}{1999}]{bai1}
\begin{barticle}[author]
\bauthor{\bsnm{Bai},~\bfnm{Z.}\binits{Z.}}
(\byear{1999}).
\btitle{Methodologies in spectral analysis of large-dimensional random
  matrices, a review}.
\bjournal{Statist. Sinica}
\bvolume{9}
\bpages{611--677}.
\end{barticle}
\endbibitem

\bibitem[\protect\citeauthoryear{Bai and Silverstein}{1998}]{bai2}
\begin{barticle}[author]
\bauthor{\bsnm{Bai},~\bfnm{Z.~D.}\binits{Z.~D.}} \AND
  \bauthor{\bsnm{Silverstein},~\bfnm{Jack~W.}\binits{J.~W.}}
(\byear{1998}).
\btitle{No eigenvalues outside the support of the limiting spectral
  distribution of large-dimensional sample covariance matrices}.
\bjournal{Ann. Probab.}
\bvolume{26}
\bpages{316--345}.
\bdoi{10.1214/aop/1022855421}
\end{barticle}
\endbibitem

\bibitem[\protect\citeauthoryear{Bai and Silverstein}{1999}]{bai3}
\begin{barticle}[author]
\bauthor{\bsnm{Bai},~\bfnm{Z.~D.}\binits{Z.~D.}} \AND
  \bauthor{\bsnm{Silverstein},~\bfnm{Jack~W.}\binits{J.~W.}}
(\byear{1999}).
\btitle{Exact Separation of Eigenvalues of Large Dimensional Sample Covariance
  Matrices}.
\bjournal{Ann. Probab.}
\bvolume{27}
\bpages{1536--1555}.
\bdoi{10.1214/aop/1022677458}
\end{barticle}
\endbibitem

\bibitem[\protect\citeauthoryear{Bai and Silverstein}{2006}]{bai4}
\begin{bbook}[author]
\bauthor{\bsnm{Bai},~\bfnm{Z.}\binits{Z.}} \AND
  \bauthor{\bsnm{Silverstein},~\bfnm{J.~W.}\binits{J.~W.}}
(\byear{2006}).
\btitle{Spectral Analysis of Large Dimensional Random Matrices}.
\bpublisher{Science Press}, \baddress{Beijing}.
\end{bbook}
\endbibitem

\bibitem[\protect\citeauthoryear{Bai and Yao}{2012}]{bai5}
\begin{barticle}[author]
\bauthor{\bsnm{Bai},~\bfnm{Zhidong}\binits{Z.}} \AND
  \bauthor{\bsnm{Yao},~\bfnm{Jianfeng}\binits{J.}}
(\byear{2012}).
\btitle{On sample eigenvalues in a generalized spiked population model}.
\bjournal{Journal of Multivariate Analysis}
\bvolume{106}
\bpages{167 - 177}.
\bdoi{http://dx.doi.org/10.1016/j.jmva.2011.10.009}
\end{barticle}
\endbibitem

\bibitem[\protect\citeauthoryear{Baik and Silverstein}{2006}]{baik1}
\begin{barticle}[author]
\bauthor{\bsnm{Baik},~\bfnm{Jinho}\binits{J.}} \AND
  \bauthor{\bsnm{Silverstein},~\bfnm{Jack~W.}\binits{J.~W.}}
(\byear{2006}).
\btitle{Eigenvalues of large sample covariance matrices of spiked population
  models}.
\bjournal{Journal of Multivariate Analysis}
\bvolume{97}
\bpages{1382 - 1408}.
\bdoi{http://dx.doi.org/10.1016/j.jmva.2005.08.003}
\end{barticle}
\endbibitem

\bibitem[\protect\citeauthoryear{Barrett et~al.}{2005}]{haploview}
\begin{barticle}[author]
\bauthor{\bsnm{Barrett},~\bfnm{J.~C.}\binits{J.~C.}},
  \bauthor{\bsnm{Fry},~\bfnm{B.}\binits{B.}},
  \bauthor{\bsnm{Maller},~\bfnm{J.}\binits{J.}} \AND
  \bauthor{\bsnm{Daly},~\bfnm{M.~J.}\binits{M.~J.}}
(\byear{2005}).
\btitle{Haploview: analysis and visualization of LD and haplotype maps}.
\bjournal{Bioinformatics}
\bvolume{21}
\bpages{263-265}.
\bdoi{10.1093/bioinformatics/bth457}
\end{barticle}
\endbibitem

\bibitem[\protect\citeauthoryear{Boyd and Vandenberghe}{2004}]{conv}
\begin{bbook}[author]
\bauthor{\bsnm{Boyd},~\bfnm{S.~P.}\binits{S.~P.}} \AND
  \bauthor{\bsnm{Vandenberghe},~\bfnm{L.}\binits{L.}}
(\byear{2004}).
\btitle{Convex Optimization}.
\bpublisher{Cambridge University Press}, \baddress{Cambridge}.
\end{bbook}
\endbibitem

\bibitem[\protect\citeauthoryear{Ding}{2015}]{ding}
\begin{barticle}[author]
\bauthor{\bsnm{Ding},~\bfnm{Xue}\binits{X.}}
(\byear{2015}).
\btitle{Convergence of Sample Eigenvectors of Spiked Population Model}.
\bjournal{Communications in Statistics - Theory and Methods}
\bvolume{44}
\bpages{3825-3840}.
\bdoi{10.1080/03610926.2013.833240}
\end{barticle}
\endbibitem

\bibitem[\protect\citeauthoryear{El~Karoui}{2008}]{karoui}
\begin{barticle}[author]
\bauthor{\bsnm{El~Karoui},~\bfnm{Noureddine}\binits{N.}}
(\byear{2008}).
\btitle{Spectrum estimation for large dimensional covariance matrices using
  random matrix theory}.
\bjournal{Ann. Statist.}
\bvolume{36}
\bpages{2757--2790}.
\bdoi{10.1214/07-AOS581}
\end{barticle}
\endbibitem

\bibitem[\protect\citeauthoryear{Girko}{1996}]{girko}
\begin{barticle}[author]
\bauthor{\bsnm{Girko},~\bfnm{V.~L.}\binits{V.~L.}}
(\byear{1996}).
\btitle{Strong Law for the eigenvalues and eigenvectors of empirical covariance
  matrices}.
\bjournal{Random Operators and Stochastic Equations}
\bvolume{4}
\bpages{176--204}.
\bdoi{10.1515/rose.1996.4.2.179}
\end{barticle}
\endbibitem

\bibitem[\protect\citeauthoryear{Hall, Marron and Neeman}{2005}]{hall}
\begin{barticle}[author]
\bauthor{\bsnm{Hall},~\bfnm{Peter}\binits{P.}},
  \bauthor{\bsnm{Marron},~\bfnm{J.~S.}\binits{J.~S.}} \AND
  \bauthor{\bsnm{Neeman},~\bfnm{Amnon}\binits{A.}}
(\byear{2005}).
\btitle{Geometric representation of high dimension, low sample size data}.
\bjournal{Journal of the Royal Statistical Society: Series B (Statistical
  Methodology)}
\bvolume{67}
\bpages{427--444}.
\bdoi{10.1111/j.1467-9868.2005.00510.x}
\end{barticle}
\endbibitem

\bibitem[\protect\citeauthoryear{Johnstone}{2001}]{johnstone1}
\begin{barticle}[author]
\bauthor{\bsnm{Johnstone},~\bfnm{Iain~M.}\binits{I.~M.}}
(\byear{2001}).
\btitle{On the distribution of the largest eigenvalue in principal components
  analysis}.
\bjournal{Ann. Statist.}
\bvolume{29}
\bpages{295--327}.
\bdoi{10.1214/aos/1009210544}
\end{barticle}
\endbibitem

\bibitem[\protect\citeauthoryear{Johnstone and Lu}{2009}]{johnstone2}
\begin{barticle}[author]
\bauthor{\bsnm{Johnstone},~\bfnm{Iain~M.}\binits{I.~M.}} \AND
  \bauthor{\bsnm{Lu},~\bfnm{Arthur~Yu}\binits{A.~Y.}}
(\byear{2009}).
\btitle{On Consistency and Sparsity for Principal Components Analysis in High
  Dimensions}.
\bjournal{Journal of the American Statistical Association}
\bvolume{104}
\bpages{682-693}.
\bnote{PMID: 20617121}.
\bdoi{10.1198/jasa.2009.0121}
\end{barticle}
\endbibitem

\bibitem[\protect\citeauthoryear{Jung and Marron}{2009}]{jung}
\begin{barticle}[author]
\bauthor{\bsnm{Jung},~\bfnm{Sungkyu}\binits{S.}} \AND
  \bauthor{\bsnm{Marron},~\bfnm{J.~S.}\binits{J.~S.}}
(\byear{2009}).
\btitle{PCA consistency in high dimension, low sample size context}.
\bjournal{Ann. Statist.}
\bvolume{37}
\bpages{4104--4130}.
\bdoi{10.1214/09-AOS709}
\end{barticle}
\endbibitem

\bibitem[\protect\citeauthoryear{Ledoit and P{\'e}ch{\'e}}{2010}]{ledoit}
\begin{barticle}[author]
\bauthor{\bsnm{Ledoit},~\bfnm{Olivier}\binits{O.}} \AND
  \bauthor{\bsnm{P{\'e}ch{\'e}},~\bfnm{Sandrine}\binits{S.}}
(\byear{2010}).
\btitle{Eigenvectors of some large sample covariance matrix ensembles}.
\bjournal{Probability Theory and Related Fields}
\bvolume{151}
\bpages{233--264}.
\bdoi{10.1007/s00440-010-0298-3}
\end{barticle}
\endbibitem

\bibitem[\protect\citeauthoryear{Lee, Zou and Wright}{2010}]{lee1}
\begin{barticle}[author]
\bauthor{\bsnm{Lee},~\bfnm{Seunggeun}\binits{S.}},
  \bauthor{\bsnm{Zou},~\bfnm{Fei}\binits{F.}} \AND
  \bauthor{\bsnm{Wright},~\bfnm{Fred~A.}\binits{F.~A.}}
(\byear{2010}).
\btitle{Convergence and prediction of principal component scores in
  high-dimensional settings}.
\bjournal{Ann. Statist.}
\bvolume{38}
\bpages{3605--3629}.
\bdoi{10.1214/10-AOS821}
\end{barticle}
\endbibitem

\bibitem[\protect\citeauthoryear{Lee, Zou and Wright}{2014}]{lee2}
\begin{barticle}[author]
\bauthor{\bsnm{Lee},~\bfnm{Seunggeun}\binits{S.}},
  \bauthor{\bsnm{Zou},~\bfnm{Fei}\binits{F.}} \AND
  \bauthor{\bsnm{Wright},~\bfnm{Fred~A.}\binits{F.~A.}}
(\byear{2014}).
\btitle{Convergence of sample eigenvalues, eigenvectors, and principal
  component scores for ultra-high dimensional data}.
\bjournal{Biometrika}.
\bdoi{10.1093/biomet/ast064}
\end{barticle}
\endbibitem

\bibitem[\protect\citeauthoryear{Mar\u{c}enko and Pastur}{1967}]{mp}
\begin{barticle}[author]
\bauthor{\bsnm{Mar\u{c}enko},~\bfnm{V~A}\binits{V.~A.}} \AND
  \bauthor{\bsnm{Pastur},~\bfnm{L~A}\binits{L.~A.}}
(\byear{1967}).
\btitle{Distribution of eigenvalues for some sets of random matrices}.
\bjournal{Mathematics of the USSR-Sbornik}
\bvolume{1}
\bpages{457}.
\end{barticle}
\endbibitem

\bibitem[\protect\citeauthoryear{Mestre}{2006}]{mestre3}
\begin{btechreport}[author]
\bauthor{\bsnm{Mestre},~\bfnm{X.}\binits{X.}}
(\byear{2006}).
\btitle{On the asymptotic behavior of quadratic forms of the resolvent of
  certain covariance-type matrices.}
\btype{Technical Report},
\baddress{CTTC/RC/2006-001, Centre Tecnologic de Telecomunicacions de
  Catalunya}.
\end{btechreport}
\endbibitem

\bibitem[\protect\citeauthoryear{Mestre}{2008a}]{mestre2}
\begin{barticle}[author]
\bauthor{\bsnm{Mestre},~\bfnm{X.}\binits{X.}}
(\byear{2008}a).
\btitle{On the Asymptotic Behavior of the Sample Estimates of Eigenvalues and
  Eigenvectors of Covariance Matrices}.
\bjournal{IEEE Transactions on Signal Processing}
\bvolume{56}
\bpages{5353-5368}.
\bdoi{10.1109/TSP.2008.929662}
\end{barticle}
\endbibitem

\bibitem[\protect\citeauthoryear{Mestre}{2008b}]{mestre1}
\begin{barticle}[author]
\bauthor{\bsnm{Mestre},~\bfnm{X.}\binits{X.}}
(\byear{2008}b).
\btitle{Improved Estimation of Eigenvalues and Eigenvectors of Covariance
  Matrices Using Their Sample Estimates}.
\bjournal{IEEE Transactions on Information Theory}
\bvolume{54}
\bpages{5113-5129}.
\bdoi{10.1109/TIT.2008.929938}
\end{barticle}
\endbibitem

\bibitem[\protect\citeauthoryear{Paul}{2007}]{paul}
\begin{barticle}[author]
\bauthor{\bsnm{Paul},~\bfnm{D.}\binits{D.}}
(\byear{2007}).
\btitle{Asymptotics of sample eigenstruture for a large dimensional spiked
  covariance model}.
\bjournal{Statist. Sinica}
\bvolume{17}
\bpages{1617--1642}.
\end{barticle}
\endbibitem

\bibitem[\protect\citeauthoryear{Price et~al.}{2006}]{price}
\begin{barticle}[author]
\bauthor{\bsnm{Price},~\bfnm{Alkes~L}\binits{A.~L.}},
  \bauthor{\bsnm{Patterson},~\bfnm{Nick~J}\binits{N.~J.}},
  \bauthor{\bsnm{Plenge},~\bfnm{Robert~M}\binits{R.~M.}},
  \bauthor{\bsnm{Weinblatt},~\bfnm{Michael~E}\binits{M.~E.}},
  \bauthor{\bsnm{Shadick},~\bfnm{Nancy~A}\binits{N.~A.}} \AND
  \bauthor{\bsnm{Reich},~\bfnm{David}\binits{D.}}
(\byear{2006}).
\btitle{Principal components analysis corrects for stratification in
  genome-wide association studies}.
\bjournal{Nature Genetics}
\bvolume{38}
\bpages{904--909}.
\bdoi{10.1038/ng1847}
\end{barticle}
\endbibitem

\bibitem[\protect\citeauthoryear{Silverstein}{1995}]{silv1}
\begin{barticle}[author]
\bauthor{\bsnm{Silverstein},~\bfnm{J.~W.}\binits{J.~W.}}
(\byear{1995}).
\btitle{Strong Convergence of the Empirical Distribution of Eigenvalues of
  Large Dimensional Random Matrices}.
\bjournal{Journal of Multivariate Analysis}
\bvolume{55}
\bpages{331 - 339}.
\bdoi{http://dx.doi.org/10.1006/jmva.1995.1083}
\end{barticle}
\endbibitem

\bibitem[\protect\citeauthoryear{Silverstein and Choi}{1995}]{silv2}
\begin{barticle}[author]
\bauthor{\bsnm{Silverstein},~\bfnm{J.~W.}\binits{J.~W.}} \AND
  \bauthor{\bsnm{Choi},~\bfnm{S.~I.}\binits{S.~I.}}
(\byear{1995}).
\btitle{Analysis of the Limiting Spectral Distribution of Large Dimensional
  Random Matrices}.
\bjournal{Journal of Multivariate Analysis}
\bvolume{54}
\bpages{295 - 309}.
\bdoi{http://dx.doi.org/10.1006/jmva.1995.1058}
\end{barticle}
\endbibitem

\bibitem[\protect\citeauthoryear{Stein and Shakarchi}{2013}]{complex}
\begin{bbook}[author]
\bauthor{\bsnm{Stein},~\bfnm{Elias~M.}\binits{E.~M.}} \AND
  \bauthor{\bsnm{Shakarchi},~\bfnm{Rami}\binits{R.}}
(\byear{2013}).
\btitle{Complex Analysis}.
\bpublisher{Princeton University Press}, \baddress{Princeton, NJ}.
\end{bbook}
\endbibitem

\bibitem[\protect\citeauthoryear{Storey et~al.}{2005}]{storey}
\begin{barticle}[author]
\bauthor{\bsnm{Storey},~\bfnm{John~D.}\binits{J.~D.}},
  \bauthor{\bsnm{Xiao},~\bfnm{Wenzhong}\binits{W.}},
  \bauthor{\bsnm{Leek},~\bfnm{Jeffrey~T.}\binits{J.~T.}},
  \bauthor{\bsnm{Tompkins},~\bfnm{Ronald~G.}\binits{R.~G.}} \AND
  \bauthor{\bsnm{Davis},~\bfnm{Ronald~W.}\binits{R.~W.}}
(\byear{2005}).
\btitle{Significance analysis of time course microarray experiments}.
\bjournal{Proceedings of the National Academy of Sciences of the United States
  of America}
\bvolume{102}
\bpages{12837-12842}.
\bdoi{10.1073/pnas.0504609102}
\end{barticle}
\endbibitem

\end{thebibliography}

\newpage
\section{Tables and Figures}\label{ref:app_table}
\begin{table}[!htbp]
	\centering
	\caption{Simulation results for GSP-based and SP-based methods for estimating the population eigenvalues,  cosine of the angles between sample and population eigenvectors, correlations between  sample and population PC scores, and the asymptotic shrinkage factors. Each cell has empirical bias (\%)  with  coefficients of variations (\%) in parenthesis.}
	\label{tab:sim}
	\resizebox{\textwidth}{!}{%
		\begin{tabular}{ll|l|ll|ll|ll|ll}
				\hline
			\multicolumn{2}{l|}{\textbf{Settings}}                                                                                   & \textbf{Method} & \textbf{Eigenvalue}                                          & \textbf{}                                              & \multicolumn{2}{l|}{\textbf{Angle}}                                                                              & \multicolumn{2}{l|}{\textbf{Correlation}}                                                                        & \multicolumn{2}{l}{\textbf{Shrinkage}}                                                                         \\ \hline
			\textbf{No.} & \textbf{}                                                                                                 & \textbf{}       & \textbf{1}                                              & \textbf{2}                                             & \textbf{1}                                              & \textbf{2}                                             & \textbf{1}                                              & \textbf{2}                                             & \textbf{1}                                             & \textbf{2}                                             \\ \hline
			1            & \multirow{3}{*}{\begin{tabular}[c]{@{}l@{}}$n=500$\\ $p=5000$\\ $\sigma^2=4$\\ $\rho=0.8$\end{tabular}}   & SP              & \begin{tabular}[c]{@{}l@{}}5.27\\ (2.37)\end{tabular}   & \begin{tabular}[c]{@{}l@{}}18.27\\ (3.11)\end{tabular} & \begin{tabular}[c]{@{}l@{}}6.52\\ (0.32)\end{tabular}   & \begin{tabular}[c]{@{}l@{}}34.07\\ (0.60)\end{tabular} & \begin{tabular}[c]{@{}l@{}}3.83\\ (0.03)\end{tabular}   & \begin{tabular}[c]{@{}l@{}}23.33\\ (0.08)\end{tabular} & \begin{tabular}[c]{@{}l@{}}5.32\\ (0.60)\end{tabular}  & \begin{tabular}[c]{@{}l@{}}17.88\\ (1.06)\end{tabular} \\
			&                                                                                                           & $\lambda$-GSP   & \begin{tabular}[c]{@{}l@{}}0.43\\ (2.67)\end{tabular}   & \begin{tabular}[c]{@{}l@{}}0.95\\ (5.27)\end{tabular}  & \begin{tabular}[c]{@{}l@{}}0.53\\ (0.77)\end{tabular}   & \begin{tabular}[c]{@{}l@{}}3.28\\ (6.26)\end{tabular}  & \begin{tabular}[c]{@{}l@{}}0.33\\ (0.31)\end{tabular}   & \begin{tabular}[c]{@{}l@{}}2.79\\ (4.69)\end{tabular}  & \begin{tabular}[c]{@{}l@{}}0.47\\ (0.92)\end{tabular}  & \begin{tabular}[c]{@{}l@{}}0.58\\ (3.31)\end{tabular}  \\
			&                                                                                                           & $d$-GSP         & \begin{tabular}[c]{@{}l@{}}0.47\\ (2.67)\end{tabular}   & \begin{tabular}[c]{@{}l@{}}0.69\\ (5.45)\end{tabular}  & \begin{tabular}[c]{@{}l@{}}0.47\\ (0.77)\end{tabular}   & \begin{tabular}[c]{@{}l@{}}2.48\\ (6.70)\end{tabular}  & \begin{tabular}[c]{@{}l@{}}0.24\\ (0.31)\end{tabular}   & \begin{tabular}[c]{@{}l@{}}2.11\\ (5.07)\end{tabular}  & \begin{tabular}[c]{@{}l@{}}0.51\\ (0.92)\end{tabular}  & \begin{tabular}[c]{@{}l@{}}0.31\\ (3.51)\end{tabular}  \\ \hline
			2            & \multirow{3}{*}{\begin{tabular}[c]{@{}l@{}}$n=500$\\ $p=5000$\\ $\sigma^2=1$\\ $\rho=0.7$\end{tabular}}   & SP              & \begin{tabular}[c]{@{}l@{}}0.10\\ (0.90)\end{tabular}   & \begin{tabular}[c]{@{}l@{}}0.46\\ (1.27)\end{tabular}  & \begin{tabular}[c]{@{}l@{}}0.16\\ (0.04)\end{tabular}   & \begin{tabular}[c]{@{}l@{}}0.44\\ (0.08)\end{tabular}  & \begin{tabular}[c]{@{}l@{}}0.08\\ (0.001)\end{tabular}  & \begin{tabular}[c]{@{}l@{}}0.24\\ (0.003)\end{tabular} & \begin{tabular}[c]{@{}l@{}}0.18\\ (0.08)\end{tabular}  & \begin{tabular}[c]{@{}l@{}}0.39\\ (0.16)\end{tabular}  \\
			&                                                                                                           & $\lambda$-GSP   & \begin{tabular}[c]{@{}l@{}}-0.04\\ (0.90)\end{tabular}  & \begin{tabular}[c]{@{}l@{}}0.04\\ (1.28)\end{tabular}  & \begin{tabular}[c]{@{}l@{}}0.01\\ (0.04)\end{tabular}   & \begin{tabular}[c]{@{}l@{}}0.004\\ (0.10)\end{tabular} & \begin{tabular}[c]{@{}l@{}}0.01\\ (0.03)\end{tabular}   & \begin{tabular}[c]{@{}l@{}}0.01\\ (0.01)\end{tabular}  & \begin{tabular}[c]{@{}l@{}}0.03\\ (0.08)\end{tabular}  & \begin{tabular}[c]{@{}l@{}}-0.03\\ (0.18)\end{tabular} \\
			&                                                                                                           & $d$-GSP         & \begin{tabular}[c]{@{}l@{}}-0.004\\ (0.90)\end{tabular} & \begin{tabular}[c]{@{}l@{}}0.10\\ (1.28)\end{tabular}  & \begin{tabular}[c]{@{}l@{}}0.03\\ (0.04)\end{tabular}   & \begin{tabular}[c]{@{}l@{}}0.03\\ (0.10)\end{tabular}  & \begin{tabular}[c]{@{}l@{}}0.004\\ (0.03)\end{tabular}  & \begin{tabular}[c]{@{}l@{}}0.01\\ (0.01)\end{tabular}  & \begin{tabular}[c]{@{}l@{}}0.07\\ (0.08)\end{tabular}  & \begin{tabular}[c]{@{}l@{}}0.03\\ (0.18)\end{tabular}  \\ \hline
			3            & \multirow{3}{*}{\begin{tabular}[c]{@{}l@{}}$n=500$\\ $p=5000$\\ $\sigma^2=7.5$\\ $\rho=0.8$\end{tabular}} & SP              & \begin{tabular}[c]{@{}l@{}}25.68\\ (2.54)\end{tabular}  & \multicolumn{1}{c|}{-}                                 & \begin{tabular}[c]{@{}l@{}}64.06\\ (0.52)\end{tabular}  & \multicolumn{1}{c|}{-}                                 & \begin{tabular}[c]{@{}l@{}}46.50\\ (0.07)\end{tabular}  & \multicolumn{1}{c|}{-}                                 & \begin{tabular}[c]{@{}l@{}}26.41\\ (0.90)\end{tabular} & \multicolumn{1}{c}{-}                                  \\
			&                                                                                                           & $\lambda$-GSP   & \begin{tabular}[c]{@{}l@{}}2.92\\ (5.7)\end{tabular}    & \multicolumn{1}{c|}{-}                                 & \begin{tabular}[c]{@{}l@{}}12.62\\ (11.90)\end{tabular} & \multicolumn{1}{c|}{-}                                 & \begin{tabular}[c]{@{}l@{}}10.95\\ (10.13)\end{tabular} & \multicolumn{1}{c|}{-}                                 & \begin{tabular}[c]{@{}l@{}}3.47\\ (4.20)\end{tabular}  & \multicolumn{1}{c}{-}                                  \\
			&                                                                                                           & $d$-GSP         & \begin{tabular}[c]{@{}l@{}}2.45\\ (5.74)\end{tabular}   & \multicolumn{1}{c|}{-}                                 & \begin{tabular}[c]{@{}l@{}}12.25\\ (10.52)\end{tabular} & \multicolumn{1}{c|}{-}                                 & \begin{tabular}[c]{@{}l@{}}10.87\\ (8.58)\end{tabular}  & \multicolumn{1}{c|}{-}                                 & \begin{tabular}[c]{@{}l@{}}3.00\\ (4.24)\end{tabular}  & \multicolumn{1}{c}{-}                                  \\ \hline
			4            & \multirow{3}{*}{\begin{tabular}[c]{@{}l@{}}$n=500$\\ $p=5000$\\ $\sigma^2=4$\\ $\rho=0$\end{tabular}}     & SP              & \begin{tabular}[c]{@{}l@{}}0.05\\ (1.58)\end{tabular}   & \begin{tabular}[c]{@{}l@{}}-0.26\\ (2.35)\end{tabular} & \begin{tabular}[c]{@{}l@{}}0.06\\ (0.23)\end{tabular}   & \begin{tabular}[c]{@{}l@{}}-0.06\\ (0.53)\end{tabular} & \begin{tabular}[c]{@{}l@{}}0.03\\ (0.02)\end{tabular}   & \begin{tabular}[c]{@{}l@{}}0.05\\ (0.08)\end{tabular}  & \begin{tabular}[c]{@{}l@{}}0.07\\ (0.43)\end{tabular}  & \begin{tabular}[c]{@{}l@{}}-0.22\\ (0.90)\end{tabular} \\
			&                                                                                                           & $\lambda$-GSP   & \begin{tabular}[c]{@{}l@{}}0.03\\ (1.58)\end{tabular}   & \begin{tabular}[c]{@{}l@{}}-0.35\\ (2.35)\end{tabular} & \begin{tabular}[c]{@{}l@{}}0.02\\ (0.24)\end{tabular}   & \begin{tabular}[c]{@{}l@{}}-0.18\\ (0.54)\end{tabular} & \begin{tabular}[c]{@{}l@{}}0.01\\ (0.02)\end{tabular}   & \begin{tabular}[c]{@{}l@{}}-0.02\\ (0.09)\end{tabular} & \begin{tabular}[c]{@{}l@{}}0.04\\ (0.43)\end{tabular}  & \begin{tabular}[c]{@{}l@{}}-0.31\\ (0.91)\end{tabular} \\
			&                                                                                                           & $d$-GSP         & \begin{tabular}[c]{@{}l@{}}0.16\\ (1.58)\end{tabular}   & \begin{tabular}[c]{@{}l@{}}-0.12\\ (2.35)\end{tabular} & \begin{tabular}[c]{@{}l@{}}0.10\\ (0.23)\end{tabular}   & \begin{tabular}[c]{@{}l@{}}-0.03\\ (0.53)\end{tabular} & \begin{tabular}[c]{@{}l@{}}0.01\\ (0.02)\end{tabular}   & \begin{tabular}[c]{@{}l@{}}0.02\\ (0.09)\end{tabular}  & \begin{tabular}[c]{@{}l@{}}0.18\\ (0.42)\end{tabular}  & \begin{tabular}[c]{@{}l@{}}-0.08\\ (0.90)\end{tabular} \\ \hline
			5            & \multirow{3}{*}{\begin{tabular}[c]{@{}l@{}}$n=500$\\ $p=50000$\\ $\sigma^2=4$\\ $\rho=0.8$\end{tabular}}  & SP              & \begin{tabular}[c]{@{}l@{}}8.35\\ (5.75)\end{tabular}   & \begin{tabular}[c]{@{}l@{}}8.78\\ (5.01)\end{tabular}  & \begin{tabular}[c]{@{}l@{}}13.41\\ (1.71)\end{tabular}  & \begin{tabular}[c]{@{}l@{}}19.20\\ (1.64)\end{tabular} & \begin{tabular}[c]{@{}l@{}}8.98\\ (0.09)\end{tabular}   & \begin{tabular}[c]{@{}l@{}}13.53\\ (0.12)\end{tabular} & \begin{tabular}[c]{@{}l@{}}8.51\\ (3.23)\end{tabular}  & \begin{tabular}[c]{@{}l@{}}9.46\\ (3.04)\end{tabular}  \\
			&                                                                                                           & $\lambda$-GSP   & \begin{tabular}[c]{@{}l@{}}0.46\\ (6.80)\end{tabular}   & \begin{tabular}[c]{@{}l@{}}-3.38\\ (6.59)\end{tabular} & \begin{tabular}[c]{@{}l@{}}4.25\\ (2.99)\end{tabular}   & \begin{tabular}[c]{@{}l@{}}3.77\\ (3.76)\end{tabular}  & \begin{tabular}[c]{@{}l@{}}4.05\\ (0.85)\end{tabular}   & \begin{tabular}[c]{@{}l@{}}4.87\\ (1.46)\end{tabular}  & \begin{tabular}[c]{@{}l@{}}0.58\\ (4.28)\end{tabular}  & \begin{tabular}[c]{@{}l@{}}-2.82\\ (4.64)\end{tabular} \\
			&                                                                                                           & $d$-GSP         & \begin{tabular}[c]{@{}l@{}}1.35\\ (6.70)\end{tabular}   & \begin{tabular}[c]{@{}l@{}}-2.16\\ (6.45)\end{tabular} & \begin{tabular}[c]{@{}l@{}}4.96\\ (2.90)\end{tabular}   & \begin{tabular}[c]{@{}l@{}}4.84\\ (3.62)\end{tabular}  & \begin{tabular}[c]{@{}l@{}}4.29\\ (0.81)\end{tabular}   & \begin{tabular}[c]{@{}l@{}}5.29\\ (1.39)\end{tabular}  & \begin{tabular}[c]{@{}l@{}}1.48\\ (4.19)\end{tabular}  & \begin{tabular}[c]{@{}l@{}}-1.59\\ (4.50)\end{tabular}
				\\	\hline
		\end{tabular}%
	}
\end{table}
\newpage

\begin{figure}[!htbp]
	\caption{Eigenvalue structures for SP and GSP models}
	\centering
		\begin{subfigure}{.48\textwidth}
			\centering
			\caption{Spiked population}
		\includegraphics[width=0.82\textwidth]{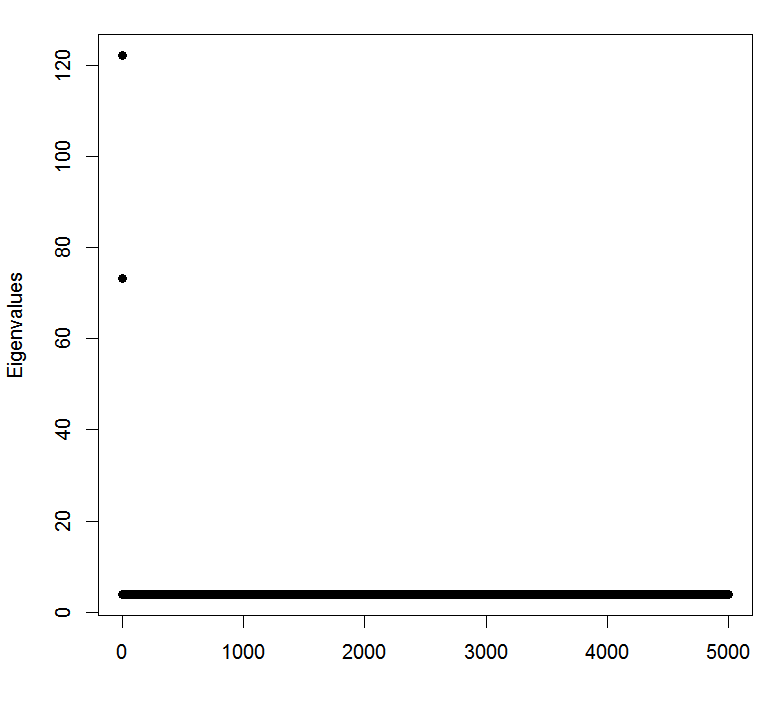}
		\label{fig:Spike}
		\end{subfigure}
		\begin{subfigure}{.48\textwidth}
			\centering
			\caption{Generalized spiked population}
		\includegraphics[width=0.82\textwidth]{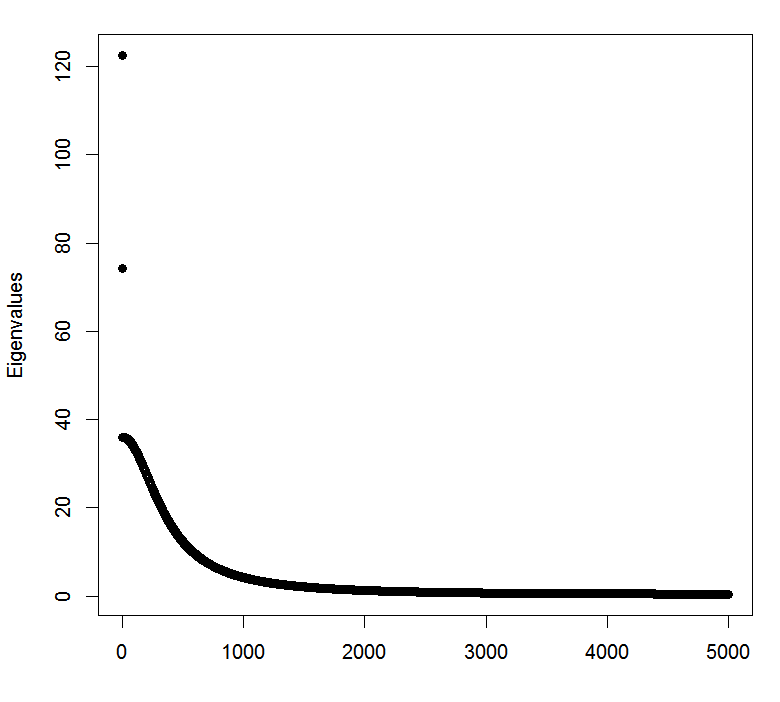}
		\label{fig:GSpike}
		\end{subfigure}
\end{figure}

\begin{figure}[!htbp]
	\caption{Eigenvalue structures in simulation studies}
	\label{fig:sim}
	\begin{subfigure}{.32\textwidth}
		\caption{Study 1}
		\includegraphics[width=\textwidth]{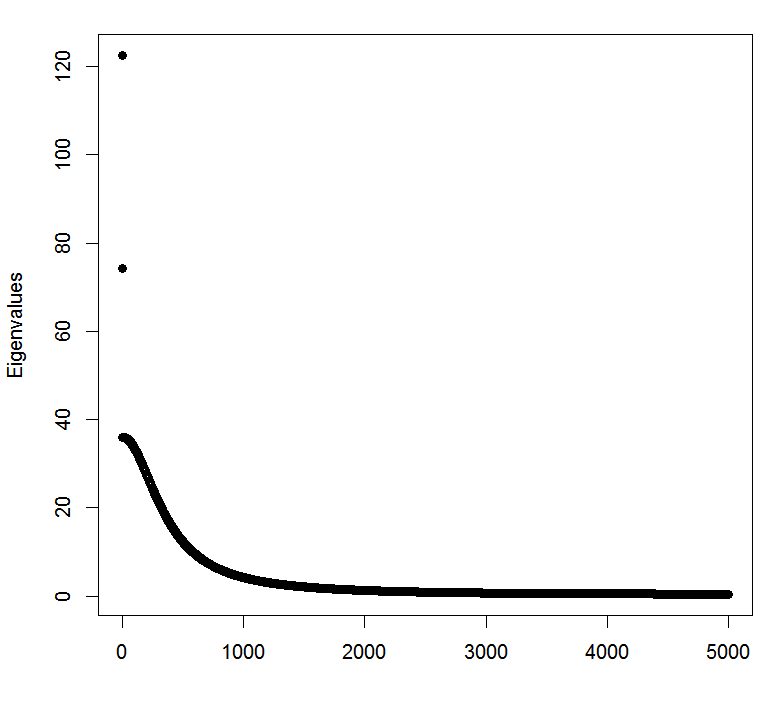}
	\end{subfigure}
	\begin{subfigure}{.32\textwidth}
		\caption{Study 2}
		\includegraphics[width=\textwidth]{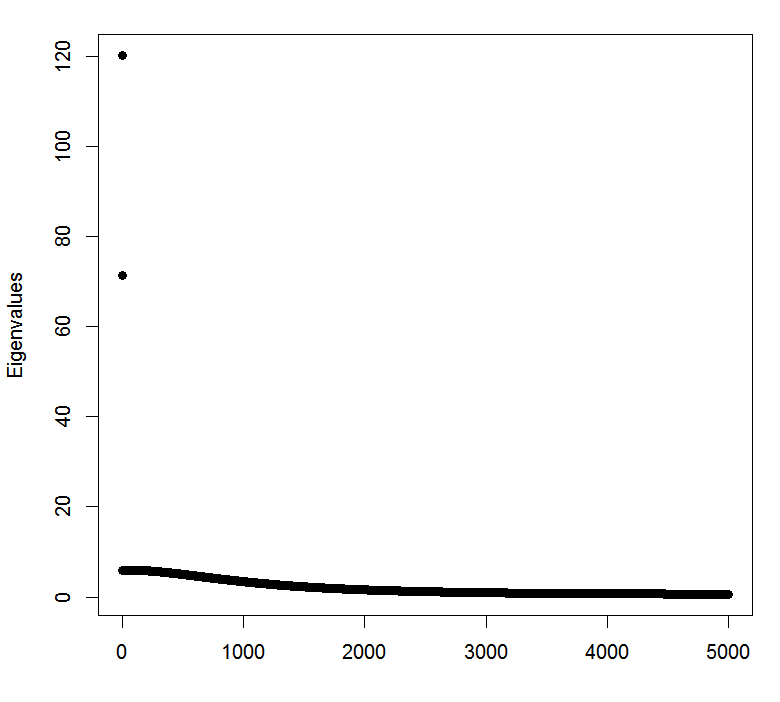}
	\end{subfigure}
	\begin{subfigure}{.32\textwidth}
		\caption{Study 3}
		\includegraphics[width=\textwidth]{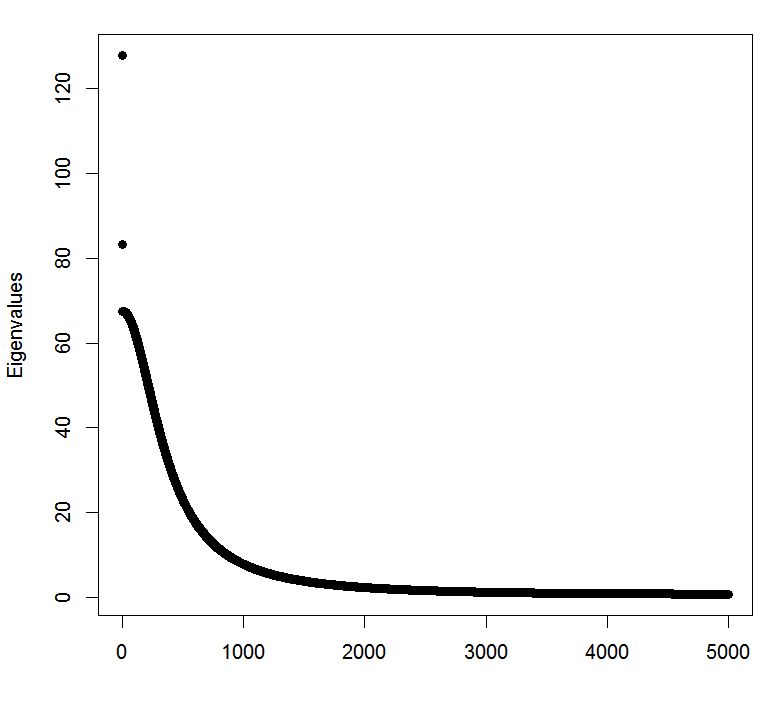}
	\end{subfigure}	
	\begin{subfigure}{.32\textwidth}
		\caption{Study 4}
		\includegraphics[width=\textwidth]{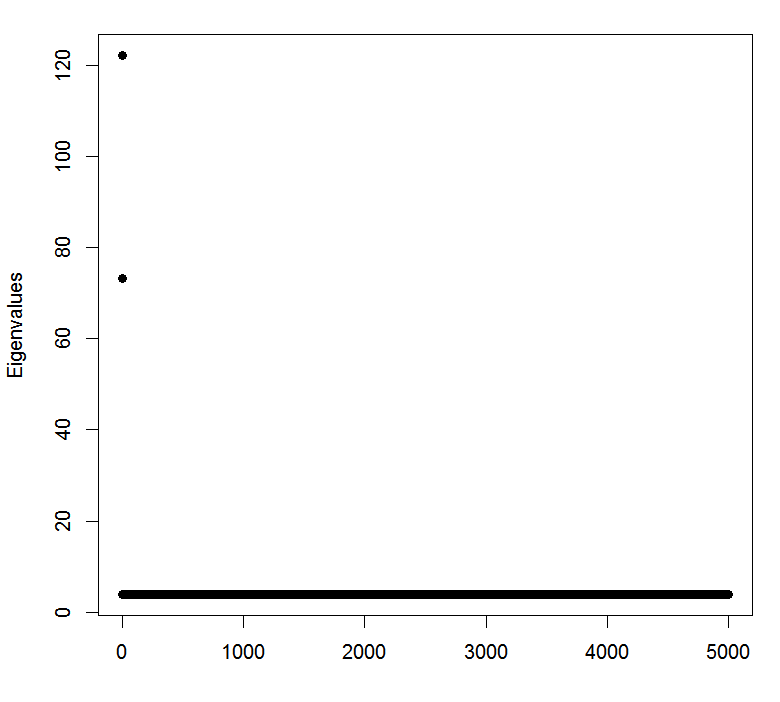}
	\end{subfigure}	
	\begin{subfigure}{.32\textwidth}
		\caption{Study 5}
		\includegraphics[width=\textwidth]{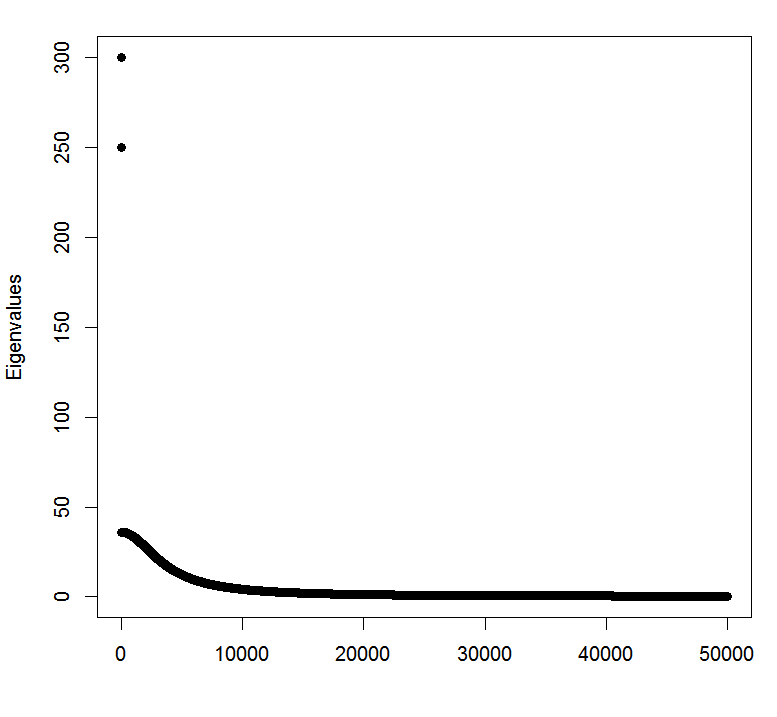}
	\end{subfigure}	
\end{figure}

\begin{figure}[!htbp]
	\caption{Estimated shrinkage factors for (a) PC1 and (b) PC2 across chromosomes 1-21 based on three different methods. (c) Comparison of the mean squared errors (MSE) of the unadjusted and adjusted PC scores based on the $d$-GSP and SP methods with the adjusted PC scores based on the $\lambda$-GSP method. The ratios of the MSEs are presented for chromosome 1-21 using the threshold $\epsilon=1$. The Y-Axis is presented in a logarithmic scale.}
	\begin{subfigure}{\textwidth}
		\centering
		\caption{}
		\label{fig:PC_hapmap1}
		\includegraphics[width=0.72\textwidth]{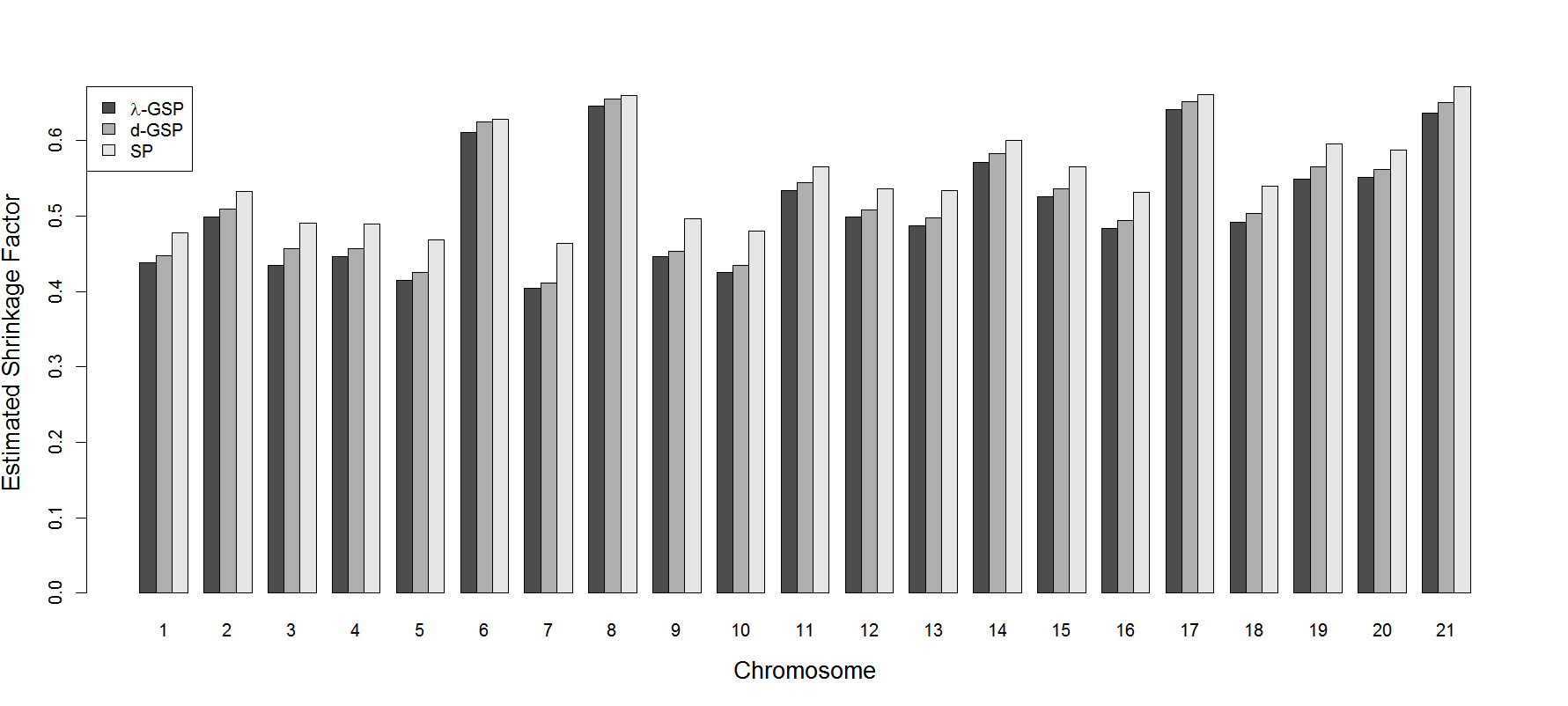}
	\end{subfigure}
	\begin{subfigure}{\textwidth}
		\centering
		\caption{}
		\label{fig:PC_hapmap2}
		\includegraphics[width=0.72\textwidth]{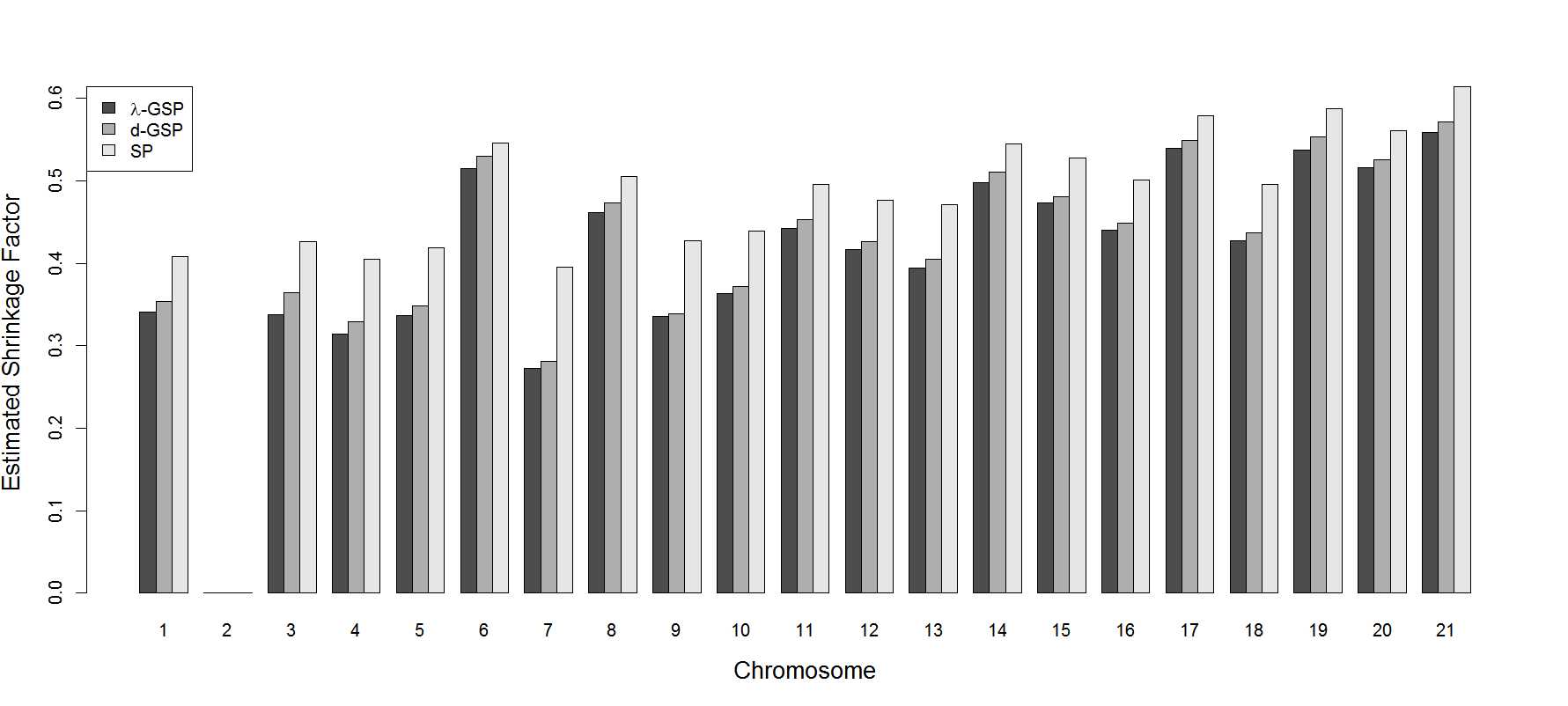}
	\end{subfigure}
	\begin{subfigure}{\textwidth}
		\centering
		\caption{}
	\label{fig:error_ratio}
	\includegraphics[width=0.72\textwidth]{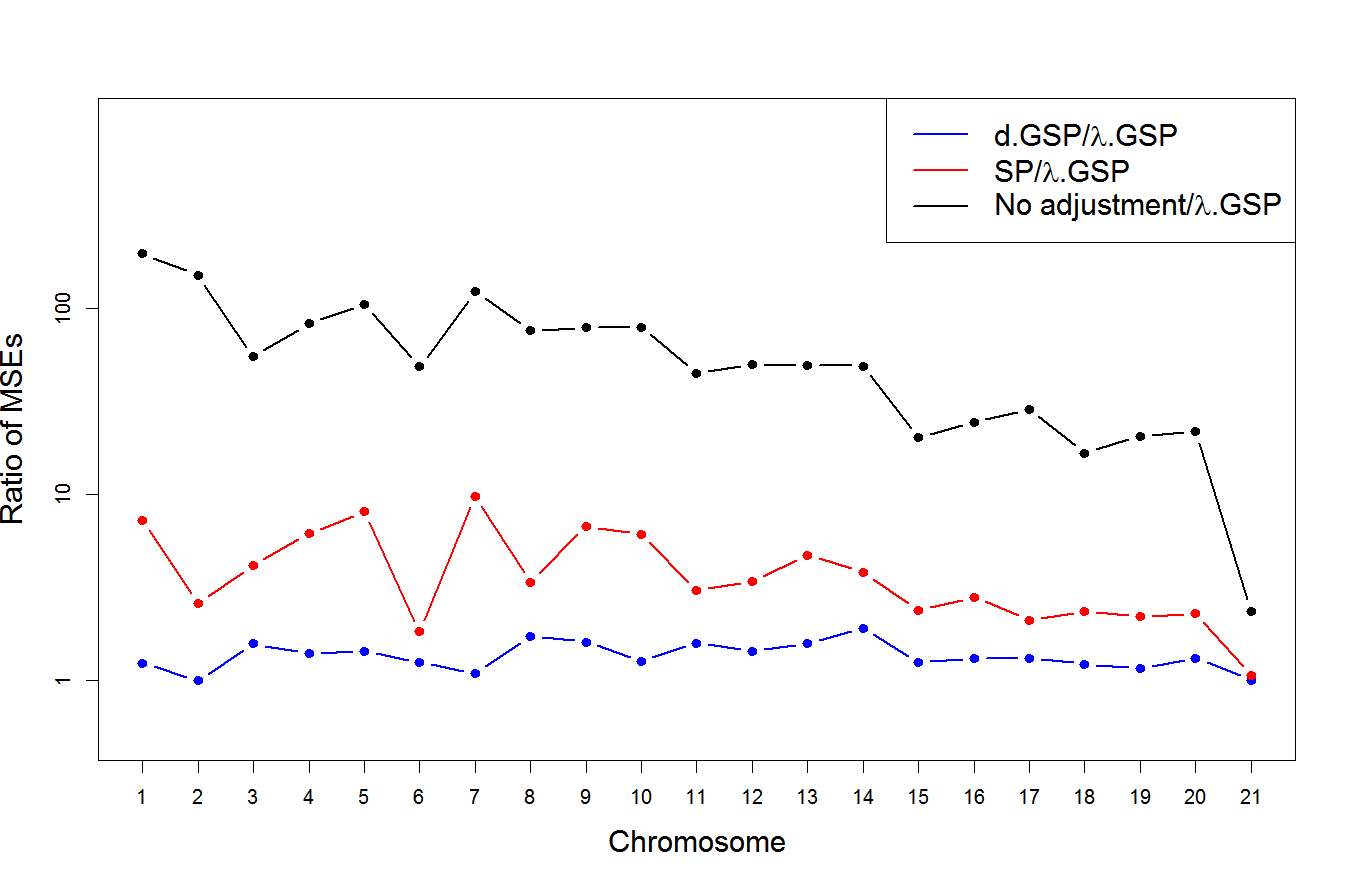}
	\end{subfigure}
\end{figure}

\begin{figure}[!htbp]
	\caption{PC1 vs PC2 plot of the Hapmap III CEU and TSI samples based on chromosome 7. The  predicted PC scores for the illustrative individual, and its bias-adjusted PC scores are also presented. 
	Since the $d$-GSP and the $\lambda$-GSP adjusted scores are nearly the same, the $\lambda$-GSP adjusted scores are not presented.}
	\begin{center}
		\includegraphics[width=\textwidth]{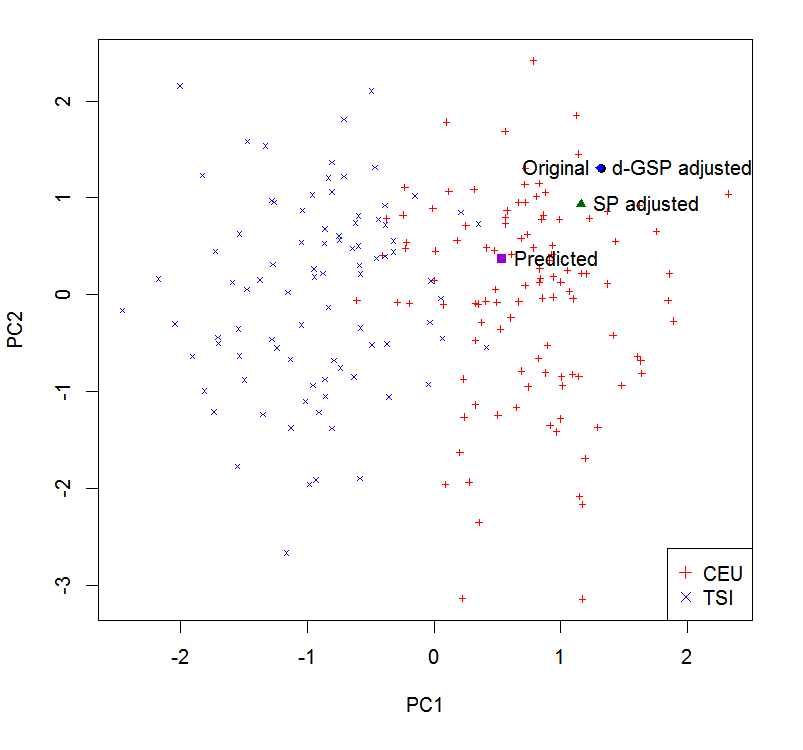}
	\end{center}
		\label{fig:bias_hapmap}
\end{figure}
\end{document}